\newcommand{\al}{\alpha}
\newcommand{\be}{\beta}
\newcommand{\Ga}{\Gamma}
\newcommand{\De}{\Delta}
\newcommand{\si}{\sigma}
\newcommand{\Si}{\Sigma}
\newcommand{\ta}{\tau}
\newcommand{\st}{\; \mid \;}
\DeclareMathOperator{\Ind}{Ind}
\DeclareMathOperator{\link}{link}
\DeclareMathOperator{\fdel}{del}
\numberwithin{figure}{section}
\numberwithin{equation}{section}
\newtheorem{theorem}{Theorem}[section]
\newtheorem{lemma}[theorem]{Lemma}
\newtheorem{proposition}[theorem]{Proposition}
\newtheorem{corollary}[theorem]{Corollary}
\newtheorem{conjecture}[theorem]{Conjecture}
\theoremstyle{definition}
\newtheorem{example}[theorem]{Example}
\newtheorem*{acknowledgement}{Acknowledgement}
\begin{document}

\title{Cohen-Macaulay graphs and face vectors of flag complexes}
\author[D.\ Cook II, U.\ Nagel]{David Cook II, Uwe Nagel}
\address{Department of Mathematics, University of Kentucky, 715 Patterson Office Tower, Lexington, KY 40506-0027, USA}
\email{dcook@ms.uky.edu, uwe.nagel@uky.edu}
\thanks{Part of the work for this paper was done while the second author was partially supported by the National Security Agency under Grant Number H98230-09-1-0032.}
\subjclass[2010]{05C25, 05E45, 13H10, 13F55}
\keywords{Stanley-Reisner ideal, Cohen-Macaulay, independence complex, vertex-decomposable}

\begin{abstract}
    We introduce a construction on a flag complex that, by means of modifying the associated graph, generates a new flag complex whose
    $h$-factor is the face vector of the original complex.  This construction yields a vertex-decomposable, hence Cohen-Macaulay, 
    complex.  From this we get a (non-numerical) characterisation of the face vectors of flag complexes and deduce also that the
    face vector of a flag complex is the $h$-vector of some vertex-decomposable flag complex.  We conjecture that the converse of
    the latter is true and prove this, by means of an explicit construction, for $h$-vectors of Cohen-Macaulay flag 
    complexes arising from bipartite graphs.  We also give several new characterisations of bipartite graphs with Cohen-Macaulay
    or Buchsbaum independence complexes.
\end{abstract}

\maketitle

\section{Introduction} \label{sec:introduction}

Simplicial complexes are combinatorial objects at the intersection of many fields of mathematics including algebra and topology.
Passing from a simplicial complex to its barycentric subdivision yields a flag complex---a simplicial complex whose minimal non-faces
are edges---while preserving topological properties. This, among other reasons, lead Stanley to state~\cite[p.\ 100]{St} that ``Flag
complexes are a fascinating class of simplicial complexes which deserve further study.''  One simple enumeration of a simplicial
complex is the face vector. Face vectors are conveniently expressed as $h$-vectors which admit a more algebraic interpretation (via 
Hilbert series).  An important task is to establish restrictions on the face or, equivalently, $h$-vectors of simplicial flag complexes.
In this note we contribute to this problem by exploring the interplay of face vectors and $h$-vectors of flag complexes.

According to~\cite[p.\ 100]{St}, Kalai conjectured that the face vector of a flag complex $\De$ is also the face vector of a balanced
complex $\Ga$.  Moreover, if $\De$ is Cohen-Macaulay, then $\Ga$ can be chosen to be Cohen-Macaulay.  If this conjecture is true (in its
entirety), then it would follow that the $h$-vector of a Cohen-Macaulay flag complex is the face vector of a balanced
complex. The first part of the conjecture, which does not assume Cohen-Macaulayness, was also conjectured by Eckhoff \cite{E} and has
recently been proven by Frohmader in~\cite[Theorem~1.1]{Fr}.  However, the second part of the conjecture remains open. Similarly,
\cite{BFS} discusses the relation between $h$-vectors of Cohen-Macaulay complexes and the face vectors of multi-complexes. In
this note, we begin studying the question of which Cohen-Macaulay flag complexes have $h$-vectors that are also the face vectors 
of flag complexes.

In Section~\ref{sec:preliminaries}, we recall some basic concepts used throughout the paper.  We introduce clique-whiskering, a
generalisation of whiskering graphs (see~\cite{DE}, \cite{FH}, \cite{LM}, \cite{SVV}, \cite{Vi}, and~\cite{Wo}), in
Section~\ref{sec:clique-whiskered}. An advantage of clique-whiskering is that it produces flag complexes of smaller
dimension than whiskering.  We show that the independence complex of a clique-whiskered graph is vertex-decomposable
(Theorem~\ref{thm:cw-vd}) and hence squarefree glicci (so, in particular, in the {\bf G}orenstein {\bf li}aison {\bf c}lass of a
{\bf c}omplete {\bf i}ntersection, see~\cite{NR}). This generalises results by Villarreal \cite{Vi} and Dochtermann and Engstr\"om
\cite{DE}. Moreover, we prove that the face vector of the independence complex of the base graph is the $h$-vector of the
independence complex of the clique-whiskered graph (Proposition~\ref{pro:cw-h-is-f}). This provides a characterisation
of the face vectors of flag complexes as the $h$-vectors of the independence complexes of clique-whiskered graphs
(Theorem~\ref{thm:cw-h-is-flag-f}) and also shows that the face vector of every flag complex is the $h$-vector of some
vertex-decomposable (hence Cohen-Macaulay) flag complex (Corollary~\ref{cor:cw-flag-fa}; compare also the independently-found result
in~\cite[Proposition~4.1]{CV}).  We conjecture that the converse of the latter is also true (Conjecture~\ref{con:h-is-f}).  As evidence,
we establish the conjecture in the case of independence complexes of bipartite graphs by means of another explicit construction
(Proposition~\ref{pro:bi-h-is-f}).

In Section~\ref{sec:bipartite}, we restrict ourselves to bipartite graphs.  We find another classification of bipartite graphs with
Cohen-Macaulay independence complexes.  From this, we establish that bipartite graphs with Buchsbaum independence complexes are exactly
those that are complete or have Cohen-Macaulay independence complexes (Theorem~\ref{thm:bi-Buchsbaum}); this result was found independently
in \cite{HYZ}.  Moreover, we define the compression of a bipartite graph and show that the $h$-vector of a Cohen-Macaulay flag complex
arising from a bipartite graph is the face vector of the independence complex of the associated compression (Proposition~\ref{pro:bi-h-is-f}).

After this note had been written, the paper~\cite{CV} of Constantinescu and Varbaro appeared which treats topics similar to those 
presented here, though in a greatly different manor.  We make more specific references in the main body of the text.

\section{Preliminaries} \label{sec:preliminaries}

A {\em simplicial complex} $\De$, on a finite set $V$, is a set of subsets of $V$ closed under inclusion; elements of $\De$ are called
{\em faces}.  The {\em dimension} of a face $\si$ is $\#\si - 1$ and of a complex $\De$ is the maximum of the dimensions of its faces.  A
complex whose maximal faces, called {\em facets}, are equi-dimensional is called {\em pure} and a complex with a unique facet is
called a {\em simplex}.

Let $\De$ be a $(d-1)$-dimensional simplicial complex.  There are two vectors and two sub-complexes of interest.  The {\em face vector} (or
$f$-vector) of $\De$ is the $(d+1)$-tuple $(f_{-1},\ldots, f_{d-1})$, where $f_i$ is the number of $i$-dimensional faces of $\De$.  The
{\em $h$-vector} of $\De$ is the $(d+1)$-tuple $(h_0, \ldots, h_d)$ given by $h_j = \sum_{i=0}^j(-1)^{j-i}\binom{d-i}{j-i}f_{i-1}$.
Notice that given the $h$-vector of a simplicial complex we can recover the face vector; indeed, for $0 \leq j \leq d$, one has
\begin{equation} \label{equ:h-to-f}
    f_{j-1} = \sum_{i=0}^j\binom{d-i}{j-i}h_{i}.
\end{equation}

Let $\si$ be a face of $\De$, then the {\em link} and {\em deletion} of $\si$ from $\De$ are given by
\[
    \link_\De{\si} := \{ \tau \in \De \st \tau \cap \si = \emptyset, \tau \cup \si \in \De \}
    \quad \text{and} \quad
    \fdel_\De{\si} := \{ \tau \in \De \st \si \nsubseteq \tau \}.
\]
Moreover, following~\cite[Definition~2.1]{PB}, a pure complex $\De$ is said to be to be {\em vertex-decomposable} if either $\De$ is a
simplex or there exists a vertex $v \in \De$, called a {\em shedding vertex}, such that both $\link_{\De}{v}$ and $\fdel_{\De}{v}$ are
vertex-decomposable.  Checking if a particular simplicial complex is vertex-decomposable can be achieved using a computer program such
as Macaulay2~\cite{M2}; the package described in~\cite{Co} provides the appropriate methods.

A {\em graph} $G = (V,E)$ is a pair consisting of a finite {\em vertex} set $V$ and an {\em edge} set $E$ of two-element subsets
of $V$.  Two vertices $u$ and $v$ are {\em adjacent} in $G$ if $uv \in E$, the {\em neighborhood} of a vertex $v$ is the set $N_G(v)$ of
vertices adjacent to $v$ in $G$, and the {\em degree} of a vertex is the cardinality of its neighborhood. A graph $G$ is {\em complete} if
every vertex is adjacent to every other vertex and a graph is {\em connected} if there is a path in $G$ between every pair of vertices of $G$.

Let $G = (V,E)$ be a graph.  A subset of vertices $U$ is an {\em independent set} if no two elements of $U$ are adjacent. The {\em
independence complex} of $G$ is the simplicial complex $\Ind{G}$ with faces generated by the independent sets of $G$.  Hence graphs
can be studied by looking at simplicial complexes.

The following result was first observed in~\cite{En} and follows directly from the definitions.
\begin{lemma} \label{lem:link-fdel}
    Let $v$ be a vertex of $G$, then
    \[
        \fdel_{\Ind{G}}{v} = \Ind(G \setminus v)
        \quad \text{and} \quad
        \link_{\Ind{G}}{v} = \Ind(G \setminus (v \cup N_G(v))).
    \]
\end{lemma}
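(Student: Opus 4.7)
Both equalities compare two collections of subsets of $V$, so the strategy is simply to unfold the definitions of $\fdel$, $\link$, and $\Ind$, and verify the resulting set-theoretic equalities. There is no essential obstacle; the only thing to keep track of carefully is that $\fdel$ and $\link$ are defined for faces, while $v$ here plays a dual role as a vertex of $G$ and as the singleton face $\{v\}$ of $\Ind G$.

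For the first equality, I would expand $\fdel_{\Ind G}{v} = \{\tau \in \Ind G \st v \notin \tau\}$ and observe that these are precisely the independent sets of $G$ that avoid $v$. Since removing the isolated-from-$\tau$ vertex $v$ (together with the edges incident to $v$) does not create or destroy any adjacency among vertices of $V \setminus \{v\}$, such a $\tau$ is independent in $G$ if and only if it is independent in $G \setminus v$. Hence $\fdel_{\Ind G}{v} = \Ind(G \setminus v)$.

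For the second equality, I would expand
\[
    \link_{\Ind G}{v} = \{\tau \st v \notin \tau,\ \tau \cup \{v\} \in \Ind G\}.
\]
The condition $\tau \cup \{v\} \in \Ind G$ is equivalent to: $\tau$ is itself independent (as a subset of an independent set), $v \notin \tau$, and no element of $\tau$ lies in $N_G(v)$. Thus $\tau$ ranges over independent subsets of $V \setminus (v \cup N_G(v))$. Conversely, any independent set in the induced subgraph on $V \setminus (v \cup N_G(v))$ is independent in $G$ and remains independent after adjoining $v$. Therefore $\link_{\Ind G}{v} = \Ind(G \setminus (v \cup N_G(v)))$, completing the verification.
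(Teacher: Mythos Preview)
Your proposal is correct and matches the paper's approach exactly: the paper does not give a proof but simply states that the result ``follows directly from the definitions,'' which is precisely what your argument carries out.
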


These combinatorial objects are related to squarefree monomial ideals.  Let $\De$ be a simplicial complex on the set $V$.  The
{\em Stanley-Reisner ideal} of $\De$ is the ideal $I(\De)$ generated by the minimal non-faces of $\De$ and the {\em Stanley-Reisner ring}
of $\De$ is $K[\De] = K[V]/I(\De)$, for a field $K$.  Thus, the Stanley-Reisner ideals of simplicial complexes on some vertex set $V$
are exactly the squarefree monomial ideals in $K[V]$.

The Stanley-Reisner ideals of independence complexes of graphs are exactly the ideals in $K[V]$ generated by quadratic squarefree
monomials.  Moreover, the generating monomials correspond to the edges of the graphs. When a simplicial complex has a quadratic
Stanley-Reisner ideal, that is, it is the independence complex of some graph, then it is called a {\em flag} complex.

A simplicial complex is pure if and only if its Stanley-Reisner ring is unmixed, that is, has associated prime ideals that are
equi-dimensional.  If $(h_0, \ldots, h_d)$ is the $h$-vector of some $(d-1)$-dimensional complex $\De$, then the Hilbert series of
$K[\De]$ is given by
\[
    \frac{h_0 + h_1t + h_2t^2 + \cdots + h_dt^d}{(1-t)^d}.
\]
Moreover, if $\De$ and $\Si$ are complexes on disjoint vertex sets, then the Hilbert series of $\De \cup \Si$ is the product of the
Hilbert series of $\De$ and $\Si$.  Finally, a simplicial complex is {\em Cohen-Macaulay} (resp. {\em Buchsbaum}) if and only if the
associated Stanley-Reisner ring is Cohen-Macaulay (resp. Buchsbaum).

The following example demonstrates that not every face vector of a simplicial complex is the $h$-vector of a flag complex.

\begin{example} \label{exa:f-not-h}
    Consider the simplicial complex with facets $\{uv, uw, vw\}$; this complex has face vector $(1,3,3)$.  Suppose $G$ is a graph whose
    independence complex has an $h$-vector whose non-zero part is $(1,3,3)$ and is of dimension $d-1$, for some $d\geq2$.  Then the Hilbert 
    series of $K[\Ind{G}]$ is $\frac{1+3t+3t^2}{(1-t)^d}$.

    Assuming $G$ is on $n$ vertices, then $d = n - 3$ and so $n \geq 5$.  Further still, $G$ must have $3$ edges.  However, $1+3t+3t^2$ is
    irreducible over $\mathbb{Z}$ so $G$ must be connected; this is impossible when $n \geq 5$.  Thus no such $G$ can exist.
\end{example}

Notice however, in Theorem~\ref{thm:cw-h-is-flag-f}, we prove that the face vector of every flag complex is indeed the $h$-vector of another
flag complex.

\section{Clique-whiskered graphs} \label{sec:clique-whiskered}

Let $G = (V,E)$ be a (non-empty) graph with $V = \{v_1, \ldots, v_n\}$.

Adding a {\em whisker} to $G$ at $v_i$ means adding a new vertex $w$ and edge $v_iw$ to $G$.  It was shown in~\cite[Proposition
2.2]{Vi} that if a whisker is added to every vertex of $G$, then the resulting graph has a Cohen-Macaulay independence complex. 
Furthermore, in~\cite[Theorem 4.4]{DE} it was shown that the independence complex of a fully-whiskered graph is also pure and 
vertex-decomposable.  We give a generalisation of whiskering in this section and explore its properties.

A subset $C$ of the vertices is a {\em clique} if it induces a complete subgraph of $G$.  A {\em clique vertex-partition} of $G$ is a set
$\pi = \{W_1, \ldots, W_t\}$ of disjoint (possibly empty) cliques of $G$ such that their disjoint union forms $V$.  Notice that $G$ may
permit many different clique vertex-partitions, and every graph has at least one clique vertex-partition, in particular, the {\em
trivial partition}, $\ta = \{\{v_1\}, \ldots, \{v_n\}\}$.

{\em Clique-whiskering}, which \cite[Proposition~22]{Wo} called clique-starring, a clique $W$ of $G$ is done by adding a new vertex
$w$ and connecting $w$ to every vertex in $W$, resulting in the graph $G^W$.  We further define {\em fully clique-whiskering} $G$ by
a clique vertex-partition $\pi = \{W_1, \ldots, W_t\}$ to be $G$ clique-whiskered at every clique of $\pi$; it produces the graph
\[
    G^\pi := (V \cup \{w_1, \ldots, w_t\}, E \cup \{vw_i \st v \in W_i\}).
\]
Notice that $G^\ta$ is the fully-whiskered graph when $\ta$ is the trivial partition and empty cliques produce isolated vertices.

\begin{example} \label{exa:clique-whiskering}
    Let $G$ be the three-cycle on vertices $\{u,v,w\}$.  There are three distinct clique vertex-partitions of $G$ (without
    empty cliques): the trivial partition $\ta = \{\{u\},\{v\},\{w\}\}$, $\pi = \{\{u,v\},\{w\}\}$, and $\rho = \{\{u,v,w\}\}$.
    These are shown in Figure~\ref{fig:clique-whiskering}.
    \begin{figure}[!ht]
        \includegraphics{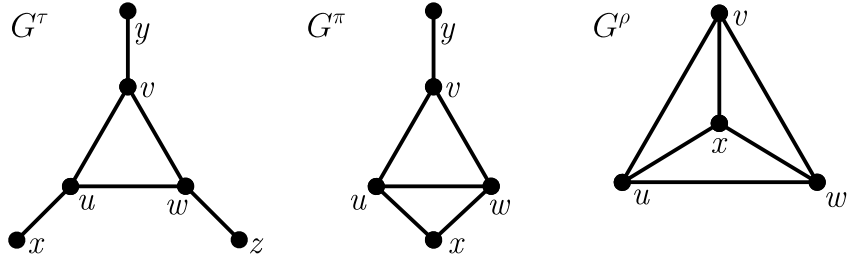}
        \caption{Clique-whiskerings of the three-cycle}
        \label{fig:clique-whiskering}
    \end{figure}
\end{example}

Following directly from the definition, we get purity of the independence complex.
\begin{lemma} \label{lem:cw-dim-purity}
    Let $\pi = \{W_1, \ldots, W_t\}$ be a clique vertex-partition of $G$.  Then $\Ind{G^\pi}$ is pure and
    $(t-1)$-dimensional.
\end{lemma}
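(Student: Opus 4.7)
The plan is to identify an explicit upper bound on the size of a face of $\Ind{G^\pi}$ and then show that every face can be enlarged to reach this bound.

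First I would observe that the vertex set of $G^\pi$ decomposes as the disjoint union $\bigsqcup_{i=1}^{t}(W_i \cup \{w_i\})$, and that each block $W_i \cup \{w_i\}$ is itself a clique of $G^\pi$: $W_i$ is a clique of $G$ by assumption, and by construction $w_i$ is adjacent to every vertex of $W_i$. Consequently any independent set of $G^\pi$ meets each block in at most one vertex, which forces $\#\si \leq t$ for every face $\si \in \Ind{G^\pi}$.

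Next I would show that this bound is always achieved. The crucial point is that $N_{G^\pi}(w_i) = W_i$, so $w_i$ is non-adjacent to every vertex of $G^\pi$ lying outside $W_i \cup \{w_i\}$. Given any independent set $S$ of $G^\pi$, let $I \subseteq \{1,\ldots,t\}$ be the set of indices $i$ with $S \cap (W_i \cup \{w_i\}) = \emptyset$. By the previous remark, adjoining the vertices $\{w_i : i \in I\}$ to $S$ does not create any new edge (and the $w_i$'s are pairwise non-adjacent), so the result is an independent set of cardinality exactly $t$ containing $S$.

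Combining these two steps, every face of $\Ind{G^\pi}$ is contained in an independent set of size $t$, so every facet has exactly $t$ elements. This yields purity together with the stated dimension $t-1$. The argument is a direct unpacking of the definitions of clique and of $G^\pi$, so I do not expect any serious obstacle; the only thing to be careful about is the (allowed) degenerate case of an empty block $W_i$, where $w_i$ is simply an isolated vertex and the argument above still applies verbatim.
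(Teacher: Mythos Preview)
Your proof is correct and follows essentially the same approach as the paper: both use that each block $W_i \cup \{w_i\}$ is a clique to bound the size of an independent set by $t$, and then use $N_{G^\pi}(w_i) = W_i$ to fill in the missing blocks with the new whisker vertices, yielding purity and dimension $t-1$. Your treatment is slightly more explicit about the degenerate empty-block case, but the argument is the same.
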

\begin{proof}
    Let $w_1, \ldots, w_t$ be the new vertices associated to $W_1, \ldots, W_t$, respectively.

    Each clique-whisker $B_i = W_i \cup \{w_i\}$ is a clique of $G^\pi$, hence any independent set of $G^\pi$ has
    at most one vertex from each $B_i$, that is, $\dim{\Ind{G^\pi}} < t$.  Moreover, $\{w_1, \ldots, w_t\}$ is
    an independent set in $G^\pi$, as $N(w_i) = W_i$ are pairwise disjoint.  Thus $\dim{\Ind{G^\pi}} = t-1$.

    Let $I = \{u_1, \ldots, u_k\}$ be an independent set of $G^\pi$, and suppose, without loss of generality,
    that $u_j \in B_j$.  Then $I \cup \{w_{k+1}, \ldots, w_{t}\}$ is a maximal independent set in $G^\pi$ of size
    $t$.  Thus $\Ind{G^\pi}$ is pure.
\end{proof}

The following result generalises~\cite[Theorem 4.4]{DE}.  Further, it was shown in~\cite[Proposition~4.1]{CV} that fully-whiskered
graphs have balanced independence complexes; the proof easily extends to independence complexes of clique-whiskered graphs.
\begin{theorem} \label{thm:cw-vd}
    Let $\pi = \{W_1, \ldots, W_t\}$ be a clique vertex-partition of $G$.  Then $\Ind{G^\pi}$ is vertex-decomposable.
\end{theorem}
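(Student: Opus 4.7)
The plan is to proceed by induction on $|V|$. In the base case $V = \emptyset$, each $W_i$ is empty, so $G^\pi$ is a disjoint union of $t$ isolated whisker vertices and $\Ind{G^\pi}$ is a $(t-1)$-simplex, hence vertex-decomposable.

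For the inductive step, $V \neq \emptyset$ forces some $W_i$ to be non-empty; after relabeling, fix $v \in W_1$, and I claim $v$ is a shedding vertex. Writing $\pi' = \{W_1 \setminus \{v\}, W_2, \ldots, W_t\}$, a direct check gives $G^\pi \setminus v = (G \setminus v)^{\pi'}$, so Lemma~\ref{lem:link-fdel} yields
\[ \fdel_{\Ind{G^\pi}}{v} = \Ind{(G \setminus v)^{\pi'}}, \]
which is vertex-decomposable by induction. For the link, the key observation is that since $W_1$ is a clique, $W_1 \setminus \{v\} \subseteq N_G(v)$, and $N_{G^\pi}(v) = N_G(v) \cup \{w_1\}$. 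Setting $H = G \setminus (v \cup N_G(v))$ and $\pi'' = \{W_2 \setminus N_G(v), \ldots, W_t \setminus N_G(v)\}$, the pieces of $\pi''$ are pairwise disjoint cliques covering $V \setminus (W_1 \cup N_G(v)) = V \setminus (v \cup N_G(v)) = V(H)$, so $\pi''$ is a clique vertex-partition of $H$; moreover the surviving whiskers $w_2, \ldots, w_t$ attach precisely to the surviving parts of their cliques, giving $G^\pi \setminus (v \cup N_{G^\pi}(v)) = H^{\pi''}$. By Lemma~\ref{lem:link-fdel},
\[ \link_{\Ind{G^\pi}}{v} = \Ind{H^{\pi''}}, \]
which is vertex-decomposable by induction. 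Purity throughout is guaranteed by Lemma~\ref{lem:cw-dim-purity}.

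The only real obstacle is the bookkeeping for the link: one must verify that the truncated cliques together with the surviving whiskers again form a clique-whiskered graph on a strictly smaller vertex set. This works precisely because $W_1$ is itself a clique, so deleting $v$'s open neighborhood in $G$ sweeps away all of $W_1 \setminus \{v\}$, which in turn allows the whisker $w_1$ to be discarded cleanly along with $v$.
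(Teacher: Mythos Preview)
Your proof is correct and follows essentially the same approach as the paper: induct on $|V(G)|$, pick any $v$ in a nonempty clique of $\pi$, and recognise both $\fdel_{\Ind{G^\pi}}{v}$ and $\link_{\Ind{G^\pi}}{v}$ as independence complexes of clique-whiskered graphs on strictly fewer vertices. The only cosmetic differences are that the paper handles empty cliques by a separate coning remark and starts the induction at $|V|=1$, whereas you absorb empty cliques into the induction and start at $|V|=0$; your explicit invocation of Lemma~\ref{lem:cw-dim-purity} for purity is a nice touch that the paper leaves implicit.
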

\begin{proof}
    Suppose $\pi = \{W_1, \ldots, W_t\}$ is a clique vertex-partition of $G$.  If, without loss of generality, $W_t$ is an empty
    clique, then $\rho = \{W_1, \ldots, W_{t-1}\}$ is a clique vertex-partition of $G$ and $\Ind{G^\rho}$ is a cone over
    $\Ind{G^\pi}$.  Recall that coning over a simplicial complex does not affect vertex-decomposability.

    Now, we proceed by induction on $n$, the number of vertices of $G$.  If $n=1$, then for any $\pi$ of $G$, $\Ind{G^\pi}$ is
    a pair of disjoint vertices (or a cone thereof), hence is vertex-decomposable.

    Let $n \geq 2$ and, without loss of generality, assume $v \in W_t$.  Define $\rho = \{W_1, \ldots, W_{t-1}, W_t \setminus {v}\}$,
    then $\rho$ is a clique vertex-partition of $G \setminus v$, a graph on $n-1$ vertices, and
    \[
        \fdel_{\Ind{G^\pi}}{v} = \Ind(G^\pi \setminus v) = \Ind((G\setminus v)^\rho)
    \]
    is vertex-decomposable by induction.  Define $\si = \{ W_1 \setminus N_G(v), \ldots, W_{t-1} \setminus N_G(v)\}$, then
    $\si$ is a clique vertex-partition of $G \setminus ({v} \cup N_G(v))$, a graph on $n - 1 - \# N_G(v)$ vertices, and
    \[
        \link_{\Ind{G^\pi}}{v} = \Ind(G^\pi \setminus ({v} \cup N_{G^\pi}(v)))
                               = \Ind((G \setminus ({v} \cup N_G(v)))^\si)
    \]
    is vertex-decomposable by induction.

    Thus $v$ is a shedding vertex of $\Ind{G^\pi}$ and $\Ind{G^\pi}$ is vertex-decomposable.
\end{proof}

Notice the proof of Theorem~\ref{thm:cw-vd} shows every vertex of $G$ is a shedding vertex of $\Ind{G^\pi}$.

By~\cite[Corollary~2.3]{SVV}, a fully-whiskered star graph is licci (in the {\bf li}aison {\bf c}lass of a {\bf c}omplete {\bf i}ntersection).
However, not every fully-whiskered graph has this property.

\begin{example} \label{exa:not-licci}
    Let $G$ be the full-whiskering of the three-cycle (see Figure~\ref{exa:clique-whiskering}, $G^\ta$) and $R = K[u,v,w,x,y,z]$.
    Then the Stanley-Reisner ring $K[\Ind{G}]$ has a free resolution of the form
    \[
        0 \longrightarrow R^3(-4) \longrightarrow R^8(-3) \longrightarrow R^6(-2) \longrightarrow R \longrightarrow K[\Ind{G}] \longrightarrow 0
    \]
    and hence, by~\cite[Corollary~5.13]{HU}, is not licci.
\end{example}

Being glicci is a weaker condition than being licci, though it is still true that every glicci ideal is Cohen-Macaulay.
It is one of the main open questions in liaison theory if every Cohen-Macaulay ideal is glicci. In this regard, we obtain:
\begin{corollary} \label{cor:cw-cm}
    Let $\pi$ be a clique vertex-partition of $G$.  Then $\Ind{G^\pi}$ is squarefree glicci and Cohen-Macaulay.
\end{corollary}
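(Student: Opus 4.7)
The plan is to observe that this corollary is an almost immediate consequence of Theorem~\ref{thm:cw-vd} combined with standard facts relating vertex-decomposability to Cohen-Macaulayness and to Gorenstein liaison.

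First, I would invoke Theorem~\ref{thm:cw-vd} to conclude that $\Ind{G^\pi}$ is (pure and) vertex-decomposable. For the Cohen-Macaulay assertion, the standard implications
\[
\text{pure vertex-decomposable} \;\Longrightarrow\; \text{pure shellable} \;\Longrightarrow\; \text{Cohen-Macaulay}
\]
(the first due to Provan-Billera \cite{PB}, the second a classical theorem of Reisner/Stanley) then yield the Cohen-Macaulayness of $K[\Ind{G^\pi}]$. Note that Lemma~\ref{lem:cw-dim-purity} already furnishes the purity hypothesis independently, though the definition of vertex-decomposable recalled in Section~\ref{sec:preliminaries} builds purity in.

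For the squarefree glicci conclusion, I would simply cite the result of Nagel-R\"omer \cite{NR}, referenced already in the introduction, which shows that the Stanley-Reisner ideal of any vertex-decomposable simplicial complex lies in the Gorenstein liaison class of a complete intersection via elementary biliaison steps through squarefree monomial ideals. Applying this to $\Ind{G^\pi}$ gives the squarefree glicci property.

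I do not anticipate any obstacle: the whole statement is a formal consequence of Theorem~\ref{thm:cw-vd} together with results cited from the literature, and the proof should be at most a couple of lines long, essentially a pointer to \cite{PB} and \cite{NR}.
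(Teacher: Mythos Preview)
Your proposal is correct and matches the paper's approach essentially verbatim: the paper's proof is a two-line pointer to \cite[Theorem~3.3]{NR} for squarefree glicci and to \cite[Theorem~2.8]{PB} for pure shellable, hence Cohen-Macaulay, exactly as you anticipated.
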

\begin{proof}
    Pure vertex-decomposable simplicial complexes are squarefree glicci~\cite[Theorem~3.3]{NR} and pure
    shellable~\cite[Theorem~2.8]{PB}, hence Cohen-Macaulay.
\end{proof}

The graphs that are full clique-whiskerings of graphs can be described in terms of clique vertex-partitions.  We note
that~\cite[Lemma~6.1]{CV} gives an equivalent statement regarding the flag complexes whose associated graphs are full
clique-whiskerings.
\begin{proposition} \label{pro:cw-which}
    A graph $G$ is a full clique-whiskering if and only if there exists a clique vertex-partition of $G$ such that
    every clique in the partition contains a vertex whose neighborhood in $G$ is contained in the clique.
\end{proposition}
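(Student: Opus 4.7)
The plan is to argue both directions by constructing explicit clique vertex-partitions. The forward implication is essentially bookkeeping, while the reverse implication requires identifying which vertices of $G$ play the role of whiskers and then reconstructing the base graph.

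For the forward direction, suppose $G = H^\pi$ for some graph $H$ with clique vertex-partition $\pi = \{W_1, \ldots, W_t\}$, and let $w_1, \ldots, w_t$ be the attached whisker vertices with $N_{G}(w_i) \cap V(H) = W_i$. Set $B_i := W_i \cup \{w_i\}$. Each $B_i$ is a clique of $G$ since $W_i$ is a clique of $H$ and $w_i$ is joined to every vertex of $W_i$. The sets $B_i$ are pairwise disjoint with union $V(G)$, so they form a clique vertex-partition of $G$. Finally, $N_G(w_i) = W_i \subseteq B_i$, since $w_i$ is adjacent in $G$ only to the vertices of $W_i$.

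For the reverse direction, suppose $\{B_1, \ldots, B_t\}$ is a clique vertex-partition of $G$ such that each $B_i$ contains a vertex $w_i$ with $N_G(w_i) \subseteq B_i$. Define $W_i := B_i \setminus \{w_i\}$, let $H$ be the subgraph of $G$ induced on $V(H) := V(G) \setminus \{w_1, \ldots, w_t\}$, and set $\pi := \{W_1, \ldots, W_t\}$. Each $W_i$ is a clique of $H$ (being a subset of the clique $B_i$ of $G$, with all its vertices in $V(H)$), and these cliques partition $V(H)$, so $\pi$ is a clique vertex-partition of $H$. It remains to verify that $G = H^\pi$. The vertex sets agree by construction. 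For the edges: every edge of $H$ is an edge of $G$, and for $v \in W_i$ the pair $vw_i$ is an edge of $G$ since $B_i$ is a clique, so $E(H^\pi) \subseteq E(G)$. Conversely, take an edge $e$ of $G$. If both endpoints lie in $V(H)$ then $e \in E(H)$. If $e = w_i v$ with $v \in V(H)$, then $v \in N_G(w_i) \subseteq B_i$, so $v \in W_i$ and hence $e \in E(H^\pi)$. The remaining case $e = w_i w_j$ with $i \neq j$ cannot occur: it would force $w_j \in N_G(w_i) \subseteq B_i$, contradicting disjointness of the $B_i$'s since $w_j \in B_j$.

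The only slightly delicate point, and the one to state carefully, is the disjointness argument ruling out edges between distinct whisker vertices $w_i$ and $w_j$; this is where the hypothesis $N_G(w_i) \subseteq B_i$ is used in full force. Everything else is purely definitional.
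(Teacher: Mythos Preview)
Your proof is correct and is precisely the unpacking of what the paper means by ``follows directly from the definition of full clique-whiskering'': the paper gives no further argument, so your explicit construction of the partition $\{B_i\}$ in the forward direction and of the base graph $H$ with partition $\{W_i\}$ in the reverse direction is exactly the same approach, just written out in full.
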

\begin{proof}
    This follows directly from the definition of full clique-whiskering.
\end{proof}

Unfortunately, not every pure vertex-decomposable flag complex comes from the independence complex of a fully clique-whiskered
graph.  For example, the independence complex of the five-cycle is a pure vertex-decomposable flag complex, but the five-cycle is 
not the full clique-whiskering of any graph.

A pure simplicial complex $\De$ is called {\em partitionable} if it can be written as a disjoint union of intervals
$[G_1, F_1] \dot\cup \cdots \dot\cup [G_s, F_s]$, called a {\em partitioning} of $\De$, where $F_1,\ldots,F_s$ are the facets of 
$\De$ and $[G, F] := \{ H \st G \subseteq H \subseteq F\}$.  As pure vertex-decomposable simplicial complexes are shellable
(\cite[Theorem~2.8]{PB}), they are also partitionable (\cite[Statement before III.2.3]{St}).  Further, the $h$-vector of a 
partitionable simplicial complex can be written in terms of a given partitioning.
\begin{proposition}{\cite[Proposition~III.2.3]{St}} \label{pro:st-partition}
    Let $\De$ be a pure partitionable simplicial complex and let $[G_1, F_1] \dot\cup \cdots \dot\cup [G_s, F_s]$ be a partitioning 
    of $\De$.  Then the $h$-vector of $\De$ is given by $ h_i = \# \{j \st \# G_j = i\}$.
\end{proposition}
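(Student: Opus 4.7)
The plan is to count faces of $\Delta$ by using the partitioning, and then match the result against the standard conversion formula (2.1) between $f$- and $h$-vectors.

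First, fix $j$ and count the faces $H \in [G_j, F_j]$ of a given cardinality. Since $\Delta$ is pure of dimension $d-1$, each facet $F_j$ has exactly $d$ vertices. The faces $H$ with $G_j \subseteq H \subseteq F_j$ and $\#H = i$ are in bijection with subsets of $F_j \setminus G_j$ of size $i - \#G_j$, so there are exactly $\binom{d - \#G_j}{i - \#G_j}$ of them (interpreting the binomial coefficient as $0$ when $\#G_j > i$).

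Second, because the intervals $[G_1,F_1], \ldots, [G_s,F_s]$ partition the faces of $\Delta$, summing over $j$ gives
\[
    f_{i-1} \;=\; \sum_{j=1}^s \binom{d - \#G_j}{i - \#G_j} \;=\; \sum_{k=0}^{i} a_k \binom{d-k}{i-k},
\]
where $a_k := \#\{j \st \#G_j = k\}$ is the quantity claimed to equal $h_k$.

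Finally, equation~\eqref{equ:h-to-f} states
\[
    f_{i-1} \;=\; \sum_{k=0}^{i} \binom{d-k}{i-k} h_k.
\]
Comparing the two expressions, the vectors $(h_0, \ldots, h_d)$ and $(a_0, \ldots, a_d)$ satisfy the same linear system with the same right-hand side. The coefficient matrix is lower-triangular with $1$'s on the diagonal (the $k=i$ term is $\binom{d-i}{0}=1$), hence invertible; a straightforward induction on $i$ yields $h_i = a_i$ for all $i$, which is the claim. There is no real obstacle here: the proof is essentially a direct computation, with the only subtlety being to observe that the identity in Step~2 is precisely the defining inversion of (2.1).
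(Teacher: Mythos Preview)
Your proof is correct and is essentially the standard argument (as in Stanley's book). Note, however, that the paper itself does not prove this proposition: it is quoted directly from \cite[Proposition~III.2.3]{St} and used as a black box, so there is no in-paper proof to compare against.
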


An interesting feature of clique-whiskered graphs is that the $h$-vectors of their independence complexes are the
same as the face vectors of the independence complexes of their base graphs.  This is can be seen as a generalisation
of~\cite[Theorem~2.1]{LM}, which provides the relation for the face vectors with respect to whiskering.
\begin{proposition} \label{pro:cw-h-is-f}
    Let $\pi$ be a clique vertex-partition of $G$.  Then the $h$-vector of $\Ind{G^\pi}$ is the face vector of $\Ind{G}$.
\end{proposition}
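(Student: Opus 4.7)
The plan is to apply Proposition~\ref{pro:st-partition} to an explicit partitioning of $\Ind{G^\pi}$ whose minimum elements range over the independent sets of $G$. By Theorem~\ref{thm:cw-vd}, $\Ind{G^\pi}$ is pure vertex-decomposable, hence partitionable, so such an approach is legitimate; the work is in writing down the partition concretely.

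For each independent set $\si$ of $G$, let $J_\si = \{ j \st \si \cap W_j \neq \emptyset \}$. Since $W_j$ is a clique and $\si$ is independent, $\# (\si \cap W_j) \leq 1$, so $\# J_\si = \# \si$. Let $w_1,\ldots,w_t$ be the whisker vertices with $N_{G^\pi}(w_j) = W_j$, and define
\[
    F_\si := \si \cup \{ w_j \st j \notin J_\si \}.
\]
First I would check that $F_\si$ is a facet of $\Ind{G^\pi}$: the $w_j$'s are pairwise non-adjacent in $G^\pi$, and $w_j \in F_\si$ means $\si \cap W_j = \emptyset$, so $w_j$ is non-adjacent to every element of $\si$; moreover $\# F_\si = t$, matching the pure dimension from Lemma~\ref{lem:cw-dim-purity}. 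Next I would verify that $\{ [\si, F_\si] \st \si \in \Ind{G}\}$ is a partitioning of $\Ind{G^\pi}$. Every element of $[\si, F_\si]$ is visibly a face of $\Ind{G^\pi}$. Disjointness holds because any $H \in [\si, F_\si]$ satisfies $H \cap V = \si$, so $\si$ is recovered from $H$. For coverage, given an arbitrary face $H \in \Ind{G^\pi}$, set $\si := H \cap V$; then $\si$ is an independent set of $G$, $\si \subseteq H$, and each $w_j \in H \setminus \si$ forces $H \cap W_j = \emptyset$, hence $j \notin J_\si$ and $H \subseteq F_\si$. Finally, every facet of $\Ind{G^\pi}$ appears as some $F_\si$: a facet meets each clique $W_j \cup \{w_j\}$ in exactly one vertex (by purity plus the proof of Lemma~\ref{lem:cw-dim-purity}), and collecting the vertices it takes from $V$ produces the relevant $\si$.

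With the partitioning in hand, Proposition~\ref{pro:st-partition} gives
\[
    h_i(\Ind{G^\pi}) = \# \{ \si \in \Ind{G} \st \# \si = i \} = f_{i-1}(\Ind{G}),
\]
which is the claim. The main obstacle is really just the bookkeeping in the third step above: making sure the map $\si \mapsto [\si, F_\si]$ simultaneously exhausts every face, keeps the intervals disjoint, and accounts for every facet. Once the combinatorics of the clique-whisker structure is set up (each $W_j \cup \{w_j\}$ forces a dichotomy inside any facet), the identification of $h_i$ with $f_{i-1}$ is immediate from the cardinality condition on minimum elements.
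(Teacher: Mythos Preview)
Your proposal is correct and is essentially the same argument as the paper's: both construct the partitioning $\{[\si,\hat\si]\}$ where $\hat\si$ is obtained by adjoining the whisker vertices $w_j$ for each clique $W_j$ disjoint from $\si$, and then invoke Proposition~\ref{pro:st-partition}. You supply more detail in verifying disjointness, coverage, and that every facet arises, whereas the paper states the partitioning and leaves these checks implicit.
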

\begin{proof}
    The result can be obtained from the methods of the proof of~\cite[Theorem~6.3]{BN}; however, we prefer to give
    a more direct argument.

    Let $d = \#\pi = \dim{\Ind{G^\pi}} + 1$. There are $f_{i-1}(\Ind{G})$ independent sets of size $i$ in $G$; let $I$ be
    one of these independent sets.  Then there are exactly $d-i$ vertices added during clique-whiskering independent from
    $I$ in $G^\pi$.  Hence $I$ can be expanded to the independent set $\hat{I}$ of size $d$ in $G^\pi$ and so $\hat{I}$ is
    (with abuse of notation) a facet of $\Ind{G^\pi}$.  Moreover, we have a partitioning of $\Ind{G^\pi}$ given by the set
    of intervals $[I, \hat{I}]$, where $I$ runs through the independent sets of $G$. Hence, by Proposition~\ref{pro:st-partition},
    the $h$-vector of $\Ind{G^\pi}$ is the face vector of $\Ind{G}$.
\end{proof}

A consequence of the previous proposition, along with Proposition~\ref{pro:cw-which}, is a (non-numerical) characterisation of the
face vectors of flag complexes.
\begin{theorem} \label{thm:cw-h-is-flag-f}
    Let $f$ be a finite sequence of positive integers.  Then the following conditions are equivalent:
    \begin{enumerate}
        \item $f$ is the face vector of a flag complex,
        \item $f$ is the $h$-vector of the independence complex of a clique-whiskered graph, and
        \item $f$ is the $h$-vector of the independence complex of a fully-whiskered graph.
    \end{enumerate}
\end{theorem}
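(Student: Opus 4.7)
The plan is to establish the three-way equivalence by running a short cycle of implications (iii) $\Rightarrow$ (ii) $\Rightarrow$ (i) $\Rightarrow$ (iii). All three arrows become essentially immediate once one has Proposition~\ref{pro:cw-h-is-f} in hand, so most of the genuine work of the theorem has already been done upstream.

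First I would observe that (iii) $\Rightarrow$ (ii) is automatic: the trivial partition $\ta = \{\{v_1\}, \ldots, \{v_n\}\}$ of any graph $G$ is a valid clique vertex-partition, so every fully-whiskered graph $G^\ta$ is a special case of a fully clique-whiskered graph $G^\pi$. For (ii) $\Rightarrow$ (i), I would apply Proposition~\ref{pro:cw-h-is-f} directly: if $f$ is the $h$-vector of $\Ind{G^\pi}$ for some clique vertex-partition $\pi$ of $G$, then $f$ is the face vector of $\Ind{G}$, which is a flag complex by definition. Finally, for (i) $\Rightarrow$ (iii), I would start from a flag complex $\De$ with face vector $f$, pick a graph $G$ with $\De = \Ind{G}$ (which exists by the definition of a flag complex), form the fully-whiskered graph $G^\ta$, and invoke Proposition~\ref{pro:cw-h-is-f} once more to conclude that the $h$-vector of $\Ind{G^\ta}$ equals the face vector of $\Ind{G} = \De$, namely $f$. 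This closes the cycle.

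No real obstacle arises, since all the combinatorial content has already been concentrated in Proposition~\ref{pro:cw-h-is-f}. Proposition~\ref{pro:cw-which} does not appear to enter the cycle of implications directly; my reading is that it is invoked for context, as it characterises intrinsically which graphs arise as full clique-whiskerings and thereby pins down the scope of conditions (ii) and (iii) in graph-theoretic terms. The only degenerate point to be mindful of is when $f$ is empty or reduces to $(1)$, which is handled transparently by taking $G$ to be the empty graph or a single vertex.
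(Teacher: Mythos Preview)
Your proof is correct and matches the paper's approach: the theorem is stated there without an explicit proof, only as an immediate consequence of Proposition~\ref{pro:cw-h-is-f} (and Proposition~\ref{pro:cw-which}), and your cycle (iii) $\Rightarrow$ (ii) $\Rightarrow$ (i) $\Rightarrow$ (iii) is exactly the natural way to unpack that. Your observation that Proposition~\ref{pro:cw-which} is not logically needed for the equivalence is also accurate; its role in the paper is contextual, making precise which graphs fall under condition~(ii), rather than supplying a step in the argument.
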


Another consequence is a (perhaps more useful) condition on the face vectors of flag complexes, which was established
independently with the added condition that the vertex-decomposable complex is also balanced (as noted above) 
in~\cite[Proposition~4.1]{CV}.
\begin{corollary} \label{cor:cw-flag-fa}
    The face vector of every flag complex is the $h$-vector of some vertex-decompos\-able flag complex.
\end{corollary}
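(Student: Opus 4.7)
The plan is to simply combine the three preceding results, since all the heavy lifting has been done. First I would note that by definition a flag complex $\Delta$ can be written as $\Ind G$ for some graph $G$ (its minimal non-faces, being edges, determine $G$ uniquely up to isolated vertices). Then I would fix any clique vertex-partition $\pi$ of $G$; the trivial partition $\tau$ always exists and is a perfectly good choice, so there is no obstruction at this step.

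Next I would form the clique-whiskered graph $G^\pi$ and pass to its independence complex $\Ind{G^\pi}$. By construction $\Ind{G^\pi}$ is a flag complex (every independence complex is flag). Theorem~\ref{thm:cw-vd} gives that $\Ind{G^\pi}$ is vertex-decomposable, and Proposition~\ref{pro:cw-h-is-f} identifies its $h$-vector with the face vector of $\Ind G = \Delta$. Stringing these three facts together yields exactly the desired conclusion.

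There is essentially no obstacle here: the corollary is a direct repackaging of the equivalence (i)$\Leftrightarrow$(iii) of Theorem~\ref{thm:cw-h-is-flag-f} together with the vertex-decomposability statement of Theorem~\ref{thm:cw-vd}. The only thing worth a brief sentence is that flagness of $\Ind{G^\pi}$ is automatic (since it is an independence complex, its Stanley--Reisner ideal is generated by the quadratic monomials corresponding to the edges of $G^\pi$), so the complex produced genuinely satisfies all three conditions (flag, vertex-decomposable, and $h$-vector equal to the prescribed face vector) simultaneously.
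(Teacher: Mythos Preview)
Your proposal is correct and is exactly the argument the paper has in mind: the corollary is stated without proof there, as an immediate consequence of Theorem~\ref{thm:cw-vd} and Proposition~\ref{pro:cw-h-is-f} applied to $\Ind G^\pi$ for any clique vertex-partition $\pi$ (e.g., the trivial one). Your remark that $\Ind G^\pi$ is automatically flag is the only point one might spell out, and you do.
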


We believe that the converse of Corollary~\ref{cor:cw-flag-fa} is true as well.  If so, this would provide a more complete
(non-numerical) characterisation of the face vectors of flag complexes.
\begin{conjecture} \label{con:h-is-f}
    The $h$-vector of every vertex-decomposable flag complex is the face vector of some flag complex.
\end{conjecture}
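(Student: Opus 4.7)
The natural plan is to induct on the number of vertices of a graph $G$ with $\Ind G = \De$ a vertex-decomposable flag complex. Let $v$ be a shedding vertex and write $D := \fdel_\De v$ and $L := \link_\De v$. By Lemma~\ref{lem:link-fdel} both are independence complexes of induced subgraphs of $G$, hence flag, and both are vertex-decomposable by definition of a shedding vertex. Splitting faces of $\De$ according to whether or not they contain $v$ gives $f_{i-1}(\De) = f_{i-1}(D) + f_{i-2}(L)$; substituting into $h_\De(t) = \sum_i f_{i-1}(\De)\,t^i(1-t)^{d-i}$ (with $d-1 = \dim\De$) yields the key recursion
\[
    h_\De(t) = h_D(t) + t \cdot h_L(t).
\]
The inductive hypothesis then produces flag complexes $\De_1$ and $\De_2$ with $F_{\De_1}(t) = h_D(t)$ and $F_{\De_2}(t) = h_L(t)$.

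The remaining step---and the main obstacle---is to realise $F_{\De_1}(t) + t\cdot F_{\De_2}(t)$ as the face polynomial of a single flag complex. The naive candidate, $\De_1$ glued to a cone at a fresh vertex over a vertex-disjoint copy of $\De_2$, has face polynomial $F_{\De_1}(t) + (1+t)F_{\De_2}(t) - 1$, overshooting by $F_{\De_2}(t) - 1$ because the extra copy of the vertex set of $\De_2$ contributes duplicate faces. I would try to repair this by strengthening the induction so that $\De_2$ appears as an induced subcomplex of $\De_1$, mirroring the induced inclusion $L \subseteq D$ (which holds because $L$ is the induced subcomplex of $D$ on $V(G\setminus v) \setminus N_G(v)$). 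Given such a nested pair, let $\De_3$ have vertex set $V(\De_1) \cup \{u\}$, with $u$ adjoined, in the associated graph, to precisely $V(\De_1) \setminus V(\De_2)$. Then $\fdel_{\De_3} u = \De_1$ and $\link_{\De_3} u = \De_2$, giving $F_{\De_3}(t) = F_{\De_1}(t) + t\cdot F_{\De_2}(t)$; and $\De_3$ is flag, since any non-face $\{u\} \cup \tau$ with $\tau \subseteq V(\De_2)$ a non-face of $\De_2$ is beaten by $\tau$ itself (a non-face of $\De_1$ by inducedness).

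The real difficulty lies in maintaining this induced-subcomplex relationship through the recursion: independent applications of the induction to $D$ and $L$ produce flag complexes $\De_1$, $\De_2$ with no automatic relation. Closing the argument would demand reformulating the induction to handle nested pairs $(\De, \De')$ of vertex-decomposable flag complexes simultaneously, together with a way of selecting shedding vertices of $\De$ compatible with the subcomplex $\De'$---a nontrivial requirement, since a shedding vertex of $\De$ need not even lie in $\De'$, let alone shed it. A perhaps more tractable alternative is to abandon the recursion in favour of a global construction analogous to the bipartite compression of Proposition~\ref{pro:bi-h-is-f}: every vertex-decomposable flag complex $\De$ admits a partitioning $[G_1, F_1] \dot\cup \cdots \dot\cup [G_s, F_s]$ by Proposition~\ref{pro:st-partition}, and the conjecture would follow if this partitioning could always be chosen so that the collection of minimal faces $\{G_1, \ldots, G_s\}$ coincides with the face set of a flag complex---perhaps via a shelling order adapted to the shedding structure of $\De$, in the spirit of the partitioning appearing in the proof of Proposition~\ref{pro:cw-h-is-f}.
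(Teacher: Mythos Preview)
This statement is a \emph{conjecture} in the paper, not a theorem: the authors do not prove it, and it was open at the time of writing (they note it was posed independently as \cite[Conjecture~1.4]{CV}). The only evidence the paper offers is Proposition~\ref{pro:bi-h-is-f}, which settles the special case where the flag complex is the independence complex of a bipartite graph, via the explicit ``compression'' construction rather than by any inductive argument on shedding vertices.

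Your proposal is therefore not to be compared against a proof in the paper---there is none. What you have written is an honest outline of the natural inductive attack together with a clear identification of why it stalls. The recursion $h_\De(t) = h_D(t) + t\,h_L(t)$ is correct (given $\dim D = \dim\De$ and $\dim L = \dim\De - 1$, which holds in the pure Provan--Billera setting), and your observation that the cone-and-glue construction overshoots by $F_{\De_2}(t)-1$ unless $\De_2$ embeds as an induced subcomplex of $\De_1$ is exactly the obstruction. But, as you yourself say, there is no mechanism forcing the inductively produced $\De_1$ and $\De_2$ to be so related: the link and deletion of $\De$ are nested, but the flag complexes witnessing their $h$-vectors as face vectors are produced by separate, uncoordinated inductions. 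Strengthening the inductive hypothesis to a statement about nested pairs runs into precisely the compatibility problem you flag---a shedding vertex of $\De$ need not interact well with a prescribed subcomplex. Your alternative suggestion, seeking a partitioning whose minimal faces themselves form a flag complex, is closer in spirit to what the paper actually does in the bipartite case (and in Proposition~\ref{pro:cw-h-is-f}), but no such argument is known in general.

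In short: your write-up is a reasonable survey of the landscape around an open problem, not a proof, and the paper does not claim one either.
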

For evidence of this conjecture, see Proposition~\ref{pro:bi-h-is-f} below which shows the conjecture is true for the independence
complexes of bipartite graphs.

Conjecture~\ref{con:h-is-f} was stated independently in~\cite[Conjecture~1.4]{CV} and further expanded in~\cite[Conjecture~1.5]{CV}.
A strengthening of Conjecture~\ref{con:h-is-f} in the case of a Gorenstein flag complex has been proposed in~\cite[Conjecture~1.4]{NP};
instances of this conjecture have been established in \cite{NPT}.

\section{Bipartite graphs} \label{sec:bipartite}

We now restrict ourselves to bipartite graphs and explore both the Buchsbaum and Cohen-Macaulay properties for the associated
independence complexes.

Let $G$ be a (non-empty) graph with vertex set $V$.  We call $G$ {\em bipartite} if $V$ can be partitioned into disjoint
sets $V_1$ and $V_2$, such that each is an independent set in $G$.  If $G$ is a bipartite graph with $\#V_1 = m$ and
$\#V_2 = n$, such that every vertex in $V_1$ is adjacent to every vertex in $V_2$, then $G$ is the {\em complete
bipartite graph} $\mathcal{K}_{m,n}$.

In the case of bipartite graphs, there are known results characterising when the independence complex of the graph is
pure or Cohen-Macaulay.
\begin{theorem}{\cite[Theorem~1.1]{Vi-2}} \label{thm:bi-pure}
    Let $G$ be a bipartite graph without isolated vertices.  Then $\Ind{G}$ is pure if and only if there is a partition
    $V_1 = \{x_1, \ldots, x_n\}$ and $V_2 = \{y_1, \ldots, y_n\}$ of the vertices of $G$ such that:
    \begin{enumerate}
        \item $x_iy_i$ is an edge of $G$, for all $1 \leq i \leq n$, and
        \item if $x_iy_j$ and $x_jy_k$ are edges in $G$, for $i, j$, and $k$ distinct, then $x_iy_k$ is an edge in $G$.
    \end{enumerate}
\end{theorem}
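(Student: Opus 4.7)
The plan is to handle both directions via the matching perspective: purity forces every maximal independent set of $G$ to have the same size, which enables Hall's marriage theorem for the existence of the matching and a rigid counting argument for transitivity.

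For the forward direction, I first observe that because $G$ has no isolated vertices, every vertex in $V_2$ has a neighbor in $V_1$ (and vice versa), so neither $V_1$ nor $V_2$ admits enlargement; both are facets of $\Ind{G}$, and purity forces $\#V_1 = \#V_2 =: n$. To obtain the perfect matching required by (i), I verify Hall's condition for $V_1$: for any $S \subseteq V_1$, the set $S \cup (V_2 \setminus N_G(S))$ is independent in $G$ (no edge crosses between its two parts, by definition of $N_G(S)$), so purity bounds its size by $n$, which gives $\#N_G(S) \geq \#S$. Hall's theorem then supplies the perfect matching $x_1y_1, \ldots, x_ny_n$ yielding (i). For (ii), I argue by contradiction: suppose $x_iy_j, x_jy_k \in E$ with $i,j,k$ distinct while $x_iy_k \notin E$, and extend $\{x_i, y_k\}$ to a facet $M$ of $\Ind{G}$, so $\#M = n$. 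The edges above, together with (i), force $M$ to exclude $x_j$ (adjacent to $y_k$), $y_j$ (adjacent to $x_i$), $x_k$ (matched to $y_k$), and $y_i$ (matched to $x_i$). Since each of the $n$ matched pairs contributes at most one vertex to $M$ and the contributions sum to $\#M = n$, every pair must contribute exactly one, contradicting that the pair $\{x_j, y_j\}$ contributes none.

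For the converse, I assume (i) and (ii) and show every maximal independent set $M$ has size $n$. The bound $\#M \leq n$ is immediate from (i). For the reverse inequality, if $\#M < n$ then some matched pair $\{x_i, y_i\}$ has neither vertex in $M$; maximality produces a neighbor $y_s \in M$ of $x_i$ and a neighbor $x_t \in M$ of $y_i$, and one checks that $i, s, t$ are pairwise distinct (otherwise $M$ would contain the matching edge $x_sy_s$). Applying (ii) to $x_ty_i$ and $x_iy_s$ then produces $x_ty_s \in E$, contradicting $\{x_t, y_s\} \subseteq M$. The main conceptual point, and the step that drives both halves, is recognizing that (i) provides an exact bookkeeping of an independent set via its matched pairs while (ii) is precisely the transitivity needed to forbid an independent set of size less than $n$ from being maximal; identifying the correct independent set to extend in each contradiction argument is the only real technical step.
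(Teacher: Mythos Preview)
The paper does not supply its own proof of this statement: it is quoted as \cite[Theorem~1.1]{Vi-2} and used as a black box, so there is no in-paper argument to compare against.

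Your proof is correct and self-contained. The forward direction is handled cleanly: observing that $V_1$ and $V_2$ are facets to get $\#V_1=\#V_2=n$, then verifying Hall's condition via the independent set $S\cup(V_2\setminus N_G(S))$ is the standard route to the perfect matching in (i), and your pigeonhole argument for (ii)---each of the $n$ matched pairs contributes at most one vertex to the size-$n$ facet $M$, so the pair $\{x_j,y_j\}$ cannot contribute zero---is tight and correct. For the converse, the distinctness check on $i,s,t$ and the application of (ii) to $x_ty_i$ and $x_iy_s$ are exactly what is needed. This is essentially the argument in Villarreal's original paper, so you have reproduced the intended proof.
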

In this case, we call such a partition and ordering of the vertices a {\em pure order} of $G$.  Further we
will say that a pure order {\em has a cross} if, for some $i \neq j$, $x_iy_j$ and $x_jy_i$ are edges of $G$, otherwise
we say the order is {\em cross-free}.

\begin{theorem}{\cite[Theorem~3.4]{HH}} \label{thm:bi-cm}
    Let $G$ be a bipartite graph on a vertex set $V$ without isolated vertices.  Then $\Ind{G}$ is Cohen-Macaulay if and only
    if there is a pure ordering $V_1 = \{x_1, \ldots, x_n\}$ and $V_2 = \{y_1, \ldots, y_n\}$ of $G$, such that
    $x_iy_j$ being an edge in $G$ implies $i \leq j$.
\end{theorem}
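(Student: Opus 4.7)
The plan is to prove Theorem~\ref{thm:bi-cm} by induction on $n$, with base case $n = 1$ (a single edge, whose independence complex is two disjoint points, trivially Cohen-Macaulay with the required ordering).

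For the reverse direction ($\Leftarrow$), assume the pure upper-triangular ordering exists. The condition $x_iy_j \in E \Rightarrow i \leq j$ forces $N_G(y_1) = \{x_1\}$, so $y_1$ is a pendant vertex. By Lemma~\ref{lem:link-fdel},
\[
    \link_{\Ind{G}}{y_1} = \Ind(G \setminus \{x_1, y_1\}),
\]
and the induced bipartite graph $G' = G \setminus \{x_1, y_1\}$ on $\{x_2, \ldots, x_n\}$, $\{y_2, \ldots, y_n\}$ inherits a pure upper-triangular ordering, so by induction $\Ind{G'}$ is Cohen-Macaulay. I would then lift this to $\Ind{G}$ using the decomposition $\Ind{G} = \mathrm{star}(y_1) \cup \fdel_{\Ind{G}}{y_1}$ with $\mathrm{star}(y_1) \cap \fdel_{\Ind{G}}{y_1} = \link_{\Ind{G}}{y_1}$. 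The star is a cone over the (Cohen-Macaulay) link and is therefore Cohen-Macaulay; a parallel induction on the structure of $G \setminus y_1$ (in which $x_1$ loses exactly one neighbor) would show that $\fdel_{\Ind{G}}{y_1} = \Ind(G \setminus y_1)$ is Cohen-Macaulay as well, and Reisner's criterion applied to the Mayer-Vietoris long exact sequence then yields the required vanishing of reduced homology of links in $\Ind{G}$.

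For the forward direction ($\Rightarrow$), assume $\Ind{G}$ is Cohen-Macaulay. Since Cohen-Macaulay implies pure, Theorem~\ref{thm:bi-pure} supplies a pure order; the task is to refine it so the upper-triangular condition holds. My plan is to produce a pendant vertex: since links of Cohen-Macaulay complexes are Cohen-Macaulay, I would exploit this hereditary property by selecting $y_j$ with $N_G(y_j)$ minimal under inclusion and invoking Reisner's criterion on $\link_{\Ind{G}}{y_j}$ to force $|N_G(y_j)| = 1$. Relabel so that this pendant vertex is $y_1$ with $N_G(y_1) = \{x_1\}$. Then $\Ind(G \setminus \{x_1, y_1\}) = \link_{\Ind{G}}{y_1}$ is Cohen-Macaulay, and by induction it carries a pure upper-triangular ordering on $\{x_2, \ldots, x_n\}$, $\{y_2, \ldots, y_n\}$. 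Reattaching $x_1, y_1$ with index $1$, I would verify the full upper-triangular condition: the only new constraints involve edges incident to $x_1$ or $y_1$, and since $1$ is the smallest index, both $x_iy_1 \in E$ (which forces $i = 1$) and $x_1y_j \in E$ (with $1 \leq j$) satisfy the inequality automatically.

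The main obstacle is showing that a Cohen-Macaulay bipartite graph with pure order must contain a pendant vertex. Purity alone is insufficient: one can construct pure, non-Cohen-Macaulay bipartite independence complexes with minimum degree at least $2$, so the Cohen-Macaulay hypothesis must be invoked essentially. The most delicate step is extracting the graph-theoretic conclusion (existence of a degree-one vertex) from the algebraic condition (Cohen-Macaulayness), likely via Reisner's criterion applied to links of carefully chosen faces, or equivalently via analysis of the second syzygies of the edge ideal.
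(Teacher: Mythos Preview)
The paper does not prove Theorem~\ref{thm:bi-cm}; it is quoted verbatim from \cite[Theorem~3.4]{HH} and used throughout Section~\ref{sec:bipartite} as an established black box. There is therefore no proof in the paper to compare your proposal against.

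A few remarks on the proposal itself. In the reverse direction, your inductive scheme is close in spirit to the argument the paper gives for Proposition~\ref{pro:bi-xfree-is-cm} (which, however, already presupposes Theorem~\ref{thm:bi-cm} via Corollary~\ref{cor:bi-hasx-not-cm}). The step where you assert that $\fdel_{\Ind G}{y_1}=\Ind(G\setminus y_1)$ is Cohen--Macaulay ``by a parallel induction'' is not justified as written: $G\setminus y_1$ no longer has the matched-pair structure $x_iy_i\in E$ that your induction hypothesis requires, since $x_1$ has lost its partner. You would need either a stronger induction hypothesis or a separate argument handling the dangling $x_1$.

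In the forward direction, the sentence ``invoking Reisner's criterion on $\link_{\Ind G}{y_j}$ to force $|N_G(y_j)|=1$'' is a placeholder, not an argument: Reisner's criterion gives vanishing of reduced homology of links, and there is no evident mechanism by which that vanishing, for the particular link you name, forces a vertex of degree one. You correctly identify this as the crux. Herzog and Hibi's original proof avoids this difficulty entirely by passing through Alexander duality and the combinatorics of distributive lattices (their bipartite Cohen--Macaulay graphs correspond to order complexes of posets), rather than by locating a pendant vertex directly from the Cohen--Macaulay hypothesis.
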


Notice that if $G$ has isolated vertices $Z = \{z_1, \ldots, z_m\}$, then $\Ind{G}$ is pure (resp. Cohen-Macaulay) if and only if
$\Ind{(G \setminus Z)}$ is pure (resp. Cohen-Macaulay).

A rather direct consequence of Theorem~\ref{thm:bi-cm} is that the Cohen-Macaulayness of the independence complex of a bipartite graph
implies vertex-decomposability.
\begin{corollary}{\cite[Theorem~2.10]{VT}} \label{cor:bi-cm-vd}
    Let $G$ be a bipartite graph.  Then its independence complex $\Ind{G}$ is Cohen-Macaulay if and only if $\Ind{G}$ is vertex-decomposable.
\end{corollary}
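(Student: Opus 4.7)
The implication that vertex-decomposability forces Cohen-Macaulayness is routine: a pure vertex-decomposable complex is shellable by~\cite[Theorem~2.8]{PB}, and shellable complexes are Cohen-Macaulay. The plan therefore focuses on the converse, which I would prove by induction on $n$, the common size of the two sides of the bipartition produced by Theorem~\ref{thm:bi-cm}. Isolated vertices of $G$ can be stripped away first since they only produce cones, which preserve vertex-decomposability. Thereafter, fix an ordering $V_1 = \{x_1, \ldots, x_n\}$ and $V_2 = \{y_1, \ldots, y_n\}$ with $x_i y_i \in E$ and $x_i y_j \in E \Rightarrow i \leq j$. My target is to exhibit $y_n$ as a shedding vertex of $\Ind G$; the base case $n = 1$ is immediate.

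For the link, Lemma~\ref{lem:link-fdel} gives $\link_{\Ind G}(y_n) = \Ind G'$ where $G' = G \setminus (\{y_n\} \cup N_G(y_n))$. Let $k = \#N_G(y_n) \geq 1$ (since $x_n y_n \in E$) and let $Y$ be the set of $y_j$'s that are isolated in $G'$, so $\Ind G' = \Ind(G' \setminus Y) * \Delta_Y$ is a join with a simplex. The crux is the equivalence, for indices $i < n$: $x_i \notin N_G(y_n)$ if and only if $N_G(y_i) \not\subseteq N_G(y_n)$. The ``$\Rightarrow$'' direction is immediate because $x_i \in N_G(y_i)$. For ``$\Leftarrow$'' I take the contrapositive: if $x_i \in N_G(y_n)$, then for any $x_j \in N_G(y_i)$ one has $j \leq i$, and if $j < i$ the edges $x_j y_i$ and $x_i y_n$ involve three distinct indices, so Theorem~\ref{thm:bi-pure}(ii) forces $x_j y_n \in E$, giving $x_j \in N_G(y_n)$. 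Consequently the vertex set of $G' \setminus Y$ is exactly $\{x_i, y_i : i \in I\}$ with $I = \{i < n : x_i \notin N_G(y_n)\}$, and after order-preserving reindexing of $I$ the graph $G' \setminus Y$ inherits the hypothesis of Theorem~\ref{thm:bi-cm} on $n - k$ balanced pairs. The inductive hypothesis then yields vertex-decomposability of $\Ind(G' \setminus Y)$, and hence of $\Ind G'$.

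For the deletion, Lemma~\ref{lem:link-fdel} gives $\fdel_{\Ind G}(y_n) = \Ind(G \setminus y_n)$. The ordering forces $N_G(x_n) = \{y_n\}$, so $x_n$ is isolated in $G \setminus y_n$; every maximal independent set of $G \setminus y_n$ therefore contains $x_n$, making $\Ind(G \setminus y_n)$ the cone over $x_n$ of $\Ind(G \setminus \{x_n, y_n\})$, and in particular pure. The graph $G \setminus \{x_n, y_n\}$ is bipartite on $n - 1$ pairs, continues to satisfy Theorem~\ref{thm:bi-cm}, and has no isolated vertices since each $x_i$ with $i < n$ retains the edge $x_i y_i$. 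The inductive hypothesis gives vertex-decomposability of $\Ind(G \setminus \{x_n, y_n\})$, and hence of its cone. Thus $y_n$ is a shedding vertex and $\Ind G$ is vertex-decomposable.

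The main obstacle is the sub-claim in the link analysis: without Theorem~\ref{thm:bi-pure}(ii) one could not match the surviving $x$- and $y$-vertices of $G' \setminus Y$ into a balanced bipartite graph satisfying Theorem~\ref{thm:bi-cm}. Once this identification is in hand the remainder of the induction is essentially formal, relying on the fact that cones and joins with simplices preserve vertex-decomposability and on the explicit descriptions of links and deletions supplied by Lemma~\ref{lem:link-fdel}.
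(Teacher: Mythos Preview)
Your argument is correct. The paper itself does not supply a proof of this corollary: it merely remarks that the statement is ``a rather direct consequence of Theorem~\ref{thm:bi-cm}'' and cites \cite[Theorem~2.10]{VT} for the result. Your inductive proof is precisely the kind of argument the paper is gesturing at, and it is essentially Van~Tuyl's proof: use the Herzog--Hibi ordering from Theorem~\ref{thm:bi-cm}, peel off the last pair $(x_n,y_n)$ by taking $y_n$ as a shedding vertex, and observe that both the link and the deletion inherit the same structure on fewer pairs. Your handling of the link---matching the removed $x_i$'s with the isolated $y_i$'s via the transitivity condition of Theorem~\ref{thm:bi-pure}(ii)---is exactly the point that needs care, and you have it right. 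One cosmetic remark: you could shorten the link analysis slightly by noting that the Herzog--Hibi condition alone already gives the needed transitivity ($x_j y_i$ and $x_i y_n$ with $j<i<n$ force $x_j y_n$), so the appeal to Theorem~\ref{thm:bi-pure}(ii) is really just unpacking what ``pure order'' means.
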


Thus, the Cohen-Macaulayness of the independence complex of a bipartite graph also implies being squarefree glicci.  This
provides a nice class of examples of Cohen-Macaulay simplicial complexes that are squarefree glicci.
\begin{corollary} \label{cor:bi-glicci}
    Let $G$ be a bipartite graph.  If $\Ind{G}$ is Cohen-Macaulay, then $\Ind{G}$ is squarefree glicci.
\end{corollary}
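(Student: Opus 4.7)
The plan is to deduce this corollary by chaining the immediately preceding Corollary~\ref{cor:bi-cm-vd} with the liaison-theoretic input already invoked in the proof of Corollary~\ref{cor:cw-cm}. Starting from the hypothesis that $\Ind{G}$ is Cohen-Macaulay, I would first apply Corollary~\ref{cor:bi-cm-vd} (the quoted result of Van Tuyl) to upgrade Cohen-Macaulayness to vertex-decomposability. This is the step that uses the bipartite hypothesis on $G$ in an essential way: in general, Cohen-Macaulay simplicial complexes need not be vertex-decomposable, but in the bipartite setting the two notions coincide for independence complexes.

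Next, I would observe that $\Ind{G}$ is automatically pure, since any Cohen-Macaulay simplicial complex is pure. Combined with the preceding step, this shows that $\Ind{G}$ is a pure vertex-decomposable flag complex. At this point I would invoke \cite[Theorem~3.3]{NR}, which states that every pure vertex-decomposable simplicial complex is squarefree glicci; this is exactly the implication already used inside the proof of Corollary~\ref{cor:cw-cm}. Applied to $\Ind{G}$, it delivers the conclusion.

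There is no real obstacle here, as the substantive work has been done in \cite{VT} and \cite{NR}. The corollary is essentially a repackaging of the chain of implications (bipartite and Cohen-Macaulay) $\Longrightarrow$ (pure vertex-decomposable) $\Longrightarrow$ (squarefree glicci) in the special case of independence complexes. The only point worth emphasising in the write-up is the role of the bipartite hypothesis, which is what allows Cohen-Macaulayness — a normally weaker condition — to imply pure vertex-decomposability in this context, thereby making \cite[Theorem~3.3]{NR} applicable.
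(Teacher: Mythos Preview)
Your proposal is correct and matches the paper's own proof essentially verbatim: the paper also invokes Corollary~\ref{cor:bi-cm-vd} to get vertex-decomposability and then cites \cite[Theorem~3.3]{NR} to conclude squarefree glicci. Your extra remark that Cohen--Macaulayness forces purity (so that the ``pure'' hypothesis in \cite[Theorem~3.3]{NR} is met) is a welcome clarification that the paper leaves implicit.
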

\begin{proof}
    Since $\Ind{G}$ is vertex-decomposable by Corollary~\ref{cor:bi-cm-vd}, then $\Ind{G}$ is squarefree glicci~\cite[Theorem~3.3]{NR}.
\end{proof}

\subsection{Buchsbaum bipartite graphs} \label{sub:Buchsbaum}

In order to classify which bipartite graphs have Buchsbaum independence complexes, we need to find a new classification of 
bipartite graphs with Cohen-Macaulay independence complexes.  First, we see that for a bipartite graph with pure independence
complex, we only need to look at one pure order to determine if the graph is cross-free.  Hence, cross-free is a property of
the graph itself, rather than a property of a particular pure order.
\begin{lemma} \label{lem:bi-pure-xfree}
    Let $G$ be a bipartite graph with pure independence complex.  Then every pure order of $G$ has a cross if and only if some
    pure order of $G$ has a cross.
\end{lemma}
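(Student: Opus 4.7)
The direction ``every pure order has a cross $\Rightarrow$ some pure order has a cross'' is immediate, since Theorem~\ref{thm:bi-pure} supplies at least one pure order whenever $\Ind G$ is pure. The content lies in the converse, which I would prove in contrapositive form: if some pure order $\sigma$ of $G$ is cross-free, then every pure order of $G$ is cross-free.

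The central step of the plan is to show that a cross-free pure order forces the perfect matching of $G$ to be unique. Let $\sigma$ be a cross-free pure order with underlying matching $M = \{x_i y_i\}$, and suppose for contradiction that $M' \neq M$ is another perfect matching of $G$. Then $M \triangle M'$ is a disjoint union of $M, M'$-alternating cycles, each of even length at least $4$. Pick one such cycle and enumerate its vertices as $x_{a_1}, y_{a_1}, x_{a_2}, y_{a_2}, \ldots, x_{a_k}, y_{a_k}$, with $a_1, \ldots, a_k$ distinct indices in $\{1, \ldots, n\}$ and $k \geq 2$; by alternation the $M$-edges on the cycle are $x_{a_i} y_{a_i}$ and the $M'$-edges are $x_{a_{i+1}} y_{a_i}$ for $i = 1, \ldots, k-1$ together with the closing edge $x_{a_1} y_{a_k}$.

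For $k = 2$ the two non-matching cycle edges $x_{a_1} y_{a_2}$ and $x_{a_2} y_{a_1}$ themselves form a cross at positions $a_1, a_2$, contradicting cross-freeness of $\sigma$. For $k \geq 3$ I would apply property (ii) of the pure order iteratively: the base step uses $x_{a_3} y_{a_2}$ together with $x_{a_2} y_{a_1}$ (indices $a_1, a_2, a_3$ distinct) to obtain $x_{a_3} y_{a_1}$ as an edge, and the inductive step combines the cycle edge $x_{a_{s+1}} y_{a_s}$ with the previously derived $x_{a_s} y_{a_1}$ to obtain $x_{a_{s+1}} y_{a_1}$. This yields $x_{a_k} y_{a_1}$ as an edge of $G$; together with the closing cycle edge $x_{a_1} y_{a_k}$ we obtain a cross at positions $a_1, a_k$, again a contradiction. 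Hence $M$ is the unique perfect matching of $G$.

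With uniqueness established, any other pure order $\sigma'$ of $G$ must have $M$ as its underlying matching, so $\sigma'$ differs from $\sigma$ only by a permutation of the $n$ matched pairs together with a possible swap of the two sides of the bipartition. Both operations preserve the cross condition---the statement ``there exist $i \neq j$ with $x_i y_j$ and $x_j y_i$ both edges of $G$'' is symmetric in $i, j$ and in the two sides---so $\sigma'$ is cross-free. The most delicate point I anticipate is verifying the distinctness hypotheses needed to apply property (ii) at each step of the induction, but these follow cleanly from the fact that the indices $a_1, \ldots, a_k$ correspond to distinct vertices on an alternating cycle.
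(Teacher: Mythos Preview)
Your proof is correct and takes a genuinely different route from the paper's. The paper argues the direct implication: it reduces to connected components, fixes a pure order with a cross at positions $c,d$, parametrises every other pure order by a pair of permutations $(\alpha,\beta)$ on the two sides (using that a connected bipartite graph has a unique bipartition), and then uses the transitivity condition~(ii) of Theorem~\ref{thm:bi-pure} to manufacture a cross in the new order. Your argument instead extracts a structural invariant: a cross-free pure order forces the underlying perfect matching of $G$ to be unique, via an alternating-cycle argument and repeated application of condition~(ii). Once the matching is pinned down, every pure order is a reindexing of the same set of matched pairs, and cross-freeness is manifestly invariant under such reindexings. Your route has the benefit of yielding an auxiliary fact---uniqueness of the perfect matching in the cross-free case---that is of independent interest and makes the symmetry of the situation transparent; the paper's route avoids this detour but involves more index-chasing with the permutations.

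One small imprecision to address: in your final paragraph you assert that any other pure order $\sigma'$ differs from $\sigma$ only by a permutation of the matched pairs together with a possible global swap of the two sides. For disconnected $G$ this is not quite right, since each component may have its sides swapped independently. This does not damage your conclusion, because a cross at positions $i,j$ requires the edge $x_i y_j$ and hence forces $i$ and $j$ to lie in the same component, where the swap is uniform; but you should either make this observation explicit or, as the paper does, reduce to connected $G$ at the outset.
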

\begin{proof}
    Since we may look at each component individually, it is sufficient to consider connected graphs.  Let $G$ be a connected
    bipartite graph with a pure independence complex and let $\{x_1, \ldots, x_n\}$ and $\{y_1, \ldots, y_n\}$
    be a pure order of $G$ which has a cross, say $x_c y_d$ and $x_c y_d$. 

    As $G$ is connected, then the bi-partitioning of the vertex set is unique and so every pure order of $G$ is of the form 
    $\{x_{\al (1)}, \ldots, x_{\al (n)}\}$ and $\{y_{\be (1)}, \ldots, y_{\be (n)}\}$, for some permutations $\al$ and $\be$
    in $\mathfrak{S}_n$, the symmetric group on $n$ elements.  Notice then $x_i y_j$ is in $G$ if and only if 
    $x_{\al (i)}y_{\be (j)}$ is in $G$.

    If $\be^{-1}(\al(c))$ is $c$ or $d$, then clearly $x_d y_{\be^{-1}(\al(c))}$ is in $G$.  On the other hand, suppose $\be^{-1}(\al(c))$ is neither
    $c$ or $d$.  As the order is pure, $x_{\al(c)} y_{\al(c)}$ is in $G$ and so $x_c y_{\be^{-1}(\al(c))}$ is in $G$.  Further, since $x_d y_c$ is 
    in $G$ and the order is pure, then $x_d y_{\be^{-1}(\al(c))}$ is in $G$.  Thus, regardless of $\al$ and $\be$, we have that 
    $x_d y_{\be^{-1}(\al(c))}$ is in $G$ and so $x_{\al(d)} y_{\al(c)}$ is in $G$.  

    We can similarly show that $x_{\al(c)} y_{\al(d)}$ is in $G$.  Thus, the cross given by $x_{\al(c)} y_{\al(d)}$ and
    $x_{\al(d)}y_{\al(c)}$ is in $G$.
\end{proof}

\begin{corollary} \label{cor:bi-hasx-not-cm}
    Any bipartite graph with a pure order that has a cross has a non-Cohen-Macaulay independence complex.
\end{corollary}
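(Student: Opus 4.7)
The plan is to argue by contradiction, combining Theorem~\ref{thm:bi-cm} with Lemma~\ref{lem:bi-pure-xfree}. Let $G$ be a bipartite graph admitting a pure order with a cross, and suppose to the contrary that $\Ind{G}$ is Cohen-Macaulay. First I would observe that the existence of a pure order forces $G$ to have no isolated vertices, since every vertex must appear as some $x_i$ or $y_i$ with $x_i y_i \in E$. So Theorem~\ref{thm:bi-cm} applies and furnishes a pure order $\{x_1,\dots,x_n\}$, $\{y_1,\dots,y_n\}$ of $G$ with the additional property that every edge $x_i y_j$ of $G$ satisfies $i \le j$.

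Next I would note that this distinguished pure order is automatically cross-free: if $x_c y_d$ and $x_d y_c$ were both edges with $c \neq d$, the ordering condition would yield $c \le d$ and $d \le c$ simultaneously, forcing $c = d$, a contradiction. Thus some pure order of $G$ is cross-free.

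Finally, Lemma~\ref{lem:bi-pure-xfree} (applied contrapositively) says that if some pure order of $G$ is cross-free, then every pure order is cross-free. But by hypothesis $G$ admits a pure order with a cross, which is the desired contradiction. Hence $\Ind{G}$ is not Cohen-Macaulay.

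I do not foresee any real obstacle here: the corollary is essentially a formal consequence of the two named results, and the only slightly delicate point is the observation that the existence of a pure order already precludes isolated vertices, so the hypothesis of Theorem~\ref{thm:bi-cm} is satisfied without any additional reduction.
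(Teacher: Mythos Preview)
Your argument is correct and uses exactly the same two ingredients as the paper's proof, namely Lemma~\ref{lem:bi-pure-xfree} and Theorem~\ref{thm:bi-cm}; you simply run the implication in the contrapositive direction (assume Cohen--Macaulay, produce a cross-free order, invoke the lemma) whereas the paper argues directly (every pure order has a cross, hence an edge $x_iy_j$ with $j<i$, so Theorem~\ref{thm:bi-cm} fails). Your explicit remark that the existence of a pure order already rules out isolated vertices is a nice touch that the paper leaves implicit.
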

\begin{proof}
    Let $G$ be a bipartite graph with a pure order that has a cross.  Then every pure order of $G$ has a cross by
    Lemma~\ref{lem:bi-pure-xfree}, hence has an edge $x_iy_j$ such that $j < i$.  Thus, by Theorem~\ref{thm:bi-cm},
    $\Ind{G}$ is not Cohen-Macaulay.
\end{proof}

Next, we see that a cross-free bipartite graph has two vertices of degree one.
\begin{lemma} \label{lem:bi-xfree-degree}
    Any cross-free bipartite graph with at least two vertices has, for any pure order, two vertices both of degree
    one that are in separate components of the bi-partition.
\end{lemma}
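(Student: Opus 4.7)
The plan is to encode the edges of $G$ as a partial order on the common index set $\{1,\ldots,n\}$ of the given pure order $V_1 = \{x_1,\ldots,x_n\}$, $V_2 = \{y_1,\ldots,y_n\}$, and then read off the degree-one vertices as extremal elements of this poset. Define $i \preceq j$ to mean that $x_i y_j$ is an edge of $G$. Reflexivity is exactly condition~(i) of the pure order ($x_i y_i$ is an edge for all $i$). Transitivity is condition~(ii): if $i \preceq j$ and $j \preceq k$ with $i, j, k$ distinct, then $x_i y_j, x_j y_k \in E$ force $x_i y_k \in E$, so $i \preceq k$; the degenerate cases in which two of the indices coincide are trivial from reflexivity. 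Antisymmetry is precisely the cross-free hypothesis: if $i \preceq j$ and $j \preceq i$ held for some $i \neq j$, then $x_i y_j$ and $x_j y_i$ would be a cross.

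Since $\preceq$ is a partial order on a finite non-empty set, it has both a maximal element $m$ and a minimal element $k$. If $x_m y_j$ is an edge, then $m \preceq j$, forcing $j = m$ by maximality; thus $x_m$ is adjacent only to $y_m$ and has degree one. Dually, if $x_i y_k$ is an edge, then $i \preceq k$ forces $i = k$ by minimality, so $y_k$ is adjacent only to $x_k$ and also has degree one. The vertices $x_m \in V_1$ and $y_k \in V_2$ then lie in separate components of the bipartition, as required. When $n = 1$ the maximal and minimal elements coincide but the conclusion still holds with the two vertices of the unique edge.

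The argument is uniform in the pure order, since the construction of $\preceq$ and the extraction of its extrema use only the pure order at hand together with the cross-free hypothesis; hence the same reasoning produces the required pair of degree-one vertices for any pure order of $G$. The only step that requires a check is antisymmetry, and this is tautological from the definition of cross-free, so I do not anticipate a genuine obstacle to the argument.
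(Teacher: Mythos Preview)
Your proof is correct and takes a genuinely different route from the paper's. The paper argues by induction on $n$: after reducing to the connected case, it deletes $x_1, y_1$, applies the inductive hypothesis to obtain degree-one vertices $x_i, y_j$ in the smaller graph, and then derives contradictions in the two cases where $G$ itself lacks a degree-one $x$-vertex or a degree-one $y$-vertex. Your argument instead packages conditions~(i), (ii) of the pure order together with the cross-free hypothesis as reflexivity, transitivity, and antisymmetry of a relation $\preceq$ on $\{1,\ldots,n\}$, and then extracts the desired vertices as a maximal and a minimal element of this poset. This is both shorter and more conceptual: it makes transparent why cross-free is exactly the missing axiom, it avoids the reduction to connected graphs and the somewhat delicate case analysis, and it anticipates the poset viewpoint underlying Theorem~\ref{thm:bi-cm} (Herzog--Hibi). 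The paper's approach, on the other hand, stays closer to the graph and does not introduce any auxiliary structure.
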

\begin{proof}
    It is sufficient to consider connected graphs; otherwise, we may look at each component individually.  Let $G$ be a cross-free
    connected bipartite graph and let $\{x_1, \ldots, x_n\}$ and $\{y_1, \ldots, y_n\}$ be a pure order of $G$.  If $n = 1$, then
    clearly $\deg_G{x_1} = \deg_G{y_1} = 1$.

    Suppose $n \geq 2$.  Let $H = G \setminus \{x_1, y_1\}$, then $H$ is also cross-free, and by induction has two vertices,
    say $x_i$ and $y_j$ of degree one.

    Assume $G$ has no vertices of degree one.  As $G$ has no vertices of degree one, if $i = j$, then $x_1y_i$ and $x_iy_1$
    are edges in $G$; this contradicts $G$ being cross-free.  Suppose then $i \neq j$.  Then $x_iy_1$ and $x_1y_j$ are edges
    in $G$.  As the order is pure, then $x_iy_j$ is in $G$ hence in $H$, but this contradicts $\deg_H{x_i} = 1$.

    Assume $G$ has vertices $\{x_{i_1}, \ldots, x_{i_m}\}$ of degree one, $m \geq 1$, but all vertices $\{y_1, \ldots, y_n\}$
    have degree at least two.  If $m = n$, then the $y_i$ must be connected, contradicting $G$ bipartite.  Let
    $J = G \setminus \{x_{i_1}, \ldots, x_{i_m}, y_{i_1}, \ldots, y_{i_m}\}$, then $J$ is also cross-free, and by induction has
    two vertices, say $x_i$ and $y_j$ of degree one.  But then $y_j$ must be connected to one of the $x_{i_t}$ which contradicts
    their having degree one.
\end{proof}

We summarise the above results to get a characterisation of bipartite graphs with Cohen-Macaulay independence complexes.
\begin{proposition} \label{pro:bi-xfree-is-cm}
    Let $G$ be a bipartite graph.  Then $G$ has a cross-free pure order if and only if its independence complex $\Ind{G}$ is Cohen-Macaulay.
\end{proposition}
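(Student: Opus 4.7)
The easier direction (Cohen-Macaulay implies the existence of a cross-free pure order) follows essentially formally from Theorem~\ref{thm:bi-cm}. Applying that theorem, one obtains a pure order in which every edge $x_iy_j$ satisfies $i \leq j$; a putative cross $x_iy_j, x_jy_i$ with $i \neq j$ would then give $i \leq j$ and $j \leq i$ simultaneously, which is impossible.

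For the substantive converse direction, my plan is to convert the combinatorial cross-free hypothesis directly into the index-inequality hypothesis of Theorem~\ref{thm:bi-cm}. After discarding any isolated vertices (using the remark following Theorem~\ref{thm:bi-cm}), fix a cross-free pure order $\{x_1, \ldots, x_n\}$, $\{y_1, \ldots, y_n\}$ and define a relation $\preceq$ on $\{1, \ldots, n\}$ by declaring $i \preceq j$ precisely when $x_i y_j$ is an edge of $G$. The key step is to verify that $\preceq$ is a partial order: reflexivity is exactly condition~(i) of Theorem~\ref{thm:bi-pure}; antisymmetry is cross-freeness (having both $i \preceq j$ and $j \preceq i$ with $i \neq j$ is exactly a cross); and transitivity is condition~(ii) of Theorem~\ref{thm:bi-pure} in the case where the three relevant indices are distinct, with coincidences among $i$, $j$, $k$ reducing to either reflexivity or one of the given relations.

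Once $\preceq$ is established as a partial order, I would pick a linear extension, i.e., a bijection $\sigma\colon \{1, \ldots, n\} \to \{1, \ldots, n\}$ with $i \preceq j \Rightarrow \sigma(i) \leq \sigma(j)$, set $\pi := \sigma^{-1}$, and relabel via $x'_i := x_{\pi(i)}$ and $y'_i := y_{\pi(i)}$. The relabeled order remains a pure order, since conditions (i) and (ii) of Theorem~\ref{thm:bi-pure} are invariant under bijective reindexing, and every edge $x'_i y'_j$ translates to $\pi(i) \preceq \pi(j)$, which by choice of $\sigma$ forces $i \leq j$. Theorem~\ref{thm:bi-cm} then delivers that $\Ind{G}$ is Cohen-Macaulay.

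The main obstacle is the transitivity verification, which requires a small case analysis because condition~(ii) of Theorem~\ref{thm:bi-pure} is stated only for distinct indices; no substantial new idea is needed, but the coincidence cases must each be handled explicitly. I note that Lemma~\ref{lem:bi-xfree-degree} appears superfluous in this approach; it would presumably be central to an alternative inductive argument that peels off degree-one vertices one at a time, but the partial order construction above is more direct and avoids the induction entirely.
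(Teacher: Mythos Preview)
Your argument is correct and takes a genuinely different route from the paper. The paper proves the substantive direction by induction on $n$: it invokes Lemma~\ref{lem:bi-xfree-degree} to locate a degree-one vertex $y_1$, removes the pair $\{x_1, y_1\}$, applies the inductive hypothesis to the smaller cross-free graph, and then observes that since $y_1$ has degree one, adjoining $x_1$ and $y_1$ back only adds edges of the form $x_1 y_i$, so the Cohen-Macaulay ordering obtained for $H$ extends to one for $G$. Your approach bypasses this induction entirely by recognising that the edge relation $i \preceq j \Leftrightarrow x_i y_j \in E(G)$ is already a partial order (reflexivity from condition~(i), transitivity from condition~(ii), antisymmetry exactly from cross-freeness), and that any linear extension of it yields in one stroke the reindexing demanded by Theorem~\ref{thm:bi-cm}.

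Your proof is more direct and conceptually transparent: it exposes the essential content of Theorem~\ref{thm:bi-cm} as the statement that the edge relation is a partial order, and renders Lemma~\ref{lem:bi-xfree-degree} unnecessary, as you correctly observe. The paper's inductive argument, by contrast, motivates Lemma~\ref{lem:bi-xfree-degree} as a structural result of independent interest and implicitly builds the linear extension one element at a time (the degree-one $y_b$ corresponds to a minimal element of $\preceq$), but at the cost of some informal relabeling in the inductive step. Both arguments ultimately reduce to Theorem~\ref{thm:bi-cm}; yours does so globally, the paper's locally.
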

\begin{proof}
    It is sufficient to consider connected graphs; otherwise, we may look at each component individually.
    Let $G$ be a cross-free connected bipartite graph and let $\{x_1, \ldots, x_n\}$ and $\{y_1, \ldots, y_n\}$ be
    a pure order of $G$.  If $n = 1$, then clearly $\Ind{G}$ is Cohen-Macaulay.

    Assume $n \geq 2$.  Then by Lemma~\ref{lem:bi-xfree-degree}, we may assume $y_1$ has degree one.  As
    $H = G \setminus \{x_1, y_1\}$ has a Cohen-Macaulay independence complex by induction, and all edges in $G$ not in
    $H$ are of the form $x_1y_i$, for some $1 \leq i \leq n$, then $\Ind{G}$ is also Cohen-Macaulay.

    If $G$ is has a cross, then by Corollary~\ref{cor:bi-hasx-not-cm}, $\Ind{G}$ is not Cohen-Macaulay.
\end{proof}

We will use the following classification of Buchsbaum complexes.  Note that the following theorem,
\cite[Theorem~3.2]{Sc}, has been rewritten using Reisner's Theorem.
\begin{theorem} \label{thm:complex-Buchsbaum}
    A simplicial complex is Buchsbaum if and only if it is pure and the link of each vertex is Cohen-Macaulay.
\end{theorem}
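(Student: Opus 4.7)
The plan is to deduce the statement from Schenzel's homological criterion for Buchsbaum\-ness, combined with Reisner's criterion for Cohen-Macaulayness. Schenzel's theorem (\cite[Theorem~3.2]{Sc}) asserts that a simplicial complex $\De$ is Buchsbaum if and only if it is pure and, for every \emph{nonempty} face $\si \in \De$, the reduced simplicial homology satisfies $\tilde H_i(\link_\De \si;K) = 0$ for all $i < \dim \link_\De \si$. Reisner's theorem provides the analogous characterisation of Cohen-Macaulayness but requires the vanishing for \emph{every} face, including $\si = \emptyset$.

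The bridge between the two is the identity $\link_{\link_\De v}(\ta) = \link_\De(\ta \cup \{v\})$, valid for every vertex $v$ of $\De$ and every face $\ta$ of $\link_\De v$. Applying Reisner's criterion to the complex $\link_\De v$, the Cohen-Macaulayness of $\link_\De v$ is equivalent to the vanishing $\tilde H_i(\link_\De \si;K) = 0$ for $i < \dim \link_\De \si$ for all faces $\si$ of $\De$ that contain $v$. Ranging $v$ over all vertices of $\De$ then covers precisely the collection of nonempty faces of $\De$ (any nonempty $\si$ arises by choosing any $v \in \si$ and setting $\ta = \si \setminus \{v\}$).

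Putting these two observations together, the condition that $\link_\De v$ be Cohen-Macaulay for every vertex $v$ is exactly equivalent to Schenzel's homological condition on links of all nonempty faces, and after adjoining purity on both sides one obtains the claimed biconditional. The only subtle point, and the single step worth stating carefully, is to note that the Cohen-Macaulayness of every vertex link does \emph{not} force the vanishing $\tilde H_i(\De;K) = 0$ for $i < \dim \De$ (the empty-face condition in Reisner's theorem); this is precisely the gap between Buchsbaum and Cohen-Macaulay, and it is invisible to the vertex-link condition, which is why the statement of the theorem stops at the Buchsbaum level rather than implying Cohen-Macaulayness.
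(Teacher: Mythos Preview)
The paper does not give its own proof of this theorem; it is quoted from Schenzel with the remark that the original homological statement has been ``rewritten using Reisner's Theorem.'' Your proposal correctly supplies exactly those details: you take Schenzel's criterion (purity plus vanishing reduced homology of links over all nonempty faces), apply Reisner's criterion to each vertex link, and use the link-of-link identity to match the two conditions. This is precisely the rewriting the paper alludes to, so your approach is the same as the paper's intended one, just made explicit.
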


Finally, we can classify all bipartite graphs with Buchsbaum independence complexes, and surprisingly, find yet another
classification of bipartite graphs with Cohen-Macaulay independence complexes.  Also note, that this theorem was proven
independently, and in a different manner, in~\cite[Theorem~1.3]{HYZ}.
\begin{theorem} \label{thm:bi-Buchsbaum}
    Let $G$ be a bipartite graph.  Then its independence complex $\Ind{G}$ is Buchsbaum if and only if $G$ is a complete bipartite graph
    $\mathcal{K}_{n,n}$, for some $n$, or $\Ind{G}$ is Cohen-Macaulay.
\end{theorem}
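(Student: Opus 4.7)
The proof separates into two directions. The backward direction is routine: Cohen-Macaulayness trivially implies Buchsbaumness, and for $G=\mathcal{K}_{n,n}$ the complex $\Ind G$ is the disjoint union of two $(n-1)$-simplices (the bipartition classes), which is pure with each vertex link a simplex, so Theorem~\ref{thm:complex-Buchsbaum} yields Buchsbaumness.

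For the forward direction, assume $\Ind G$ is Buchsbaum, and seek to show that $G=\mathcal{K}_{n,n}$ or $\Ind G$ is Cohen-Macaulay. If $G$ has an isolated vertex, then $\Ind G$ is a cone (a join with a simplex), and in this situation Buchsbaumness is equivalent to Cohen-Macaulayness. So we may assume $G$ has no isolated vertex; purity together with Theorem~\ref{thm:bi-pure} then gives a pure order on $G$, and if that order is cross-free Proposition~\ref{pro:bi-xfree-is-cm} concludes. What remains is to treat the case that the pure order has a cross and to show that this forces $G=\mathcal{K}_{n,n}$.

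The key structural observation is the equivalence relation $i\sim j$ defined by $\{x_iy_j,\, x_jy_i\}\subseteq E(G)$, whose transitivity follows directly from Theorem~\ref{thm:bi-pure}(ii); let its classes be $T^1,\ldots,T^s$, with $t_\alpha=|T^\alpha|$. Each class induces the complete bipartite graph $\mathcal{K}_{t_\alpha,t_\alpha}$ on $\{x_i,y_i:i\in T^\alpha\}$, and another application of~(ii) shows that ``some edge $x_iy_j$ with $i\in T^\alpha$, $j\in T^\beta$ exists'' is equivalent to ``every such edge exists'', yielding a well-defined partial order $T^\alpha\le T^\beta$, with antisymmetry because a reverse inequality would create a cross merging the two classes. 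Having a cross is equivalent to some $t_\alpha\ge 2$, and $G\ne\mathcal{K}_{n,n}$ is equivalent to $s\ge 2$. If $G$ were disconnected, $\Ind G$ would decompose as a join, and Buchsbaumness would force every factor to be Cohen-Macaulay, contradicting Proposition~\ref{pro:bi-xfree-is-cm} on the factor containing $T^\alpha$. So $G$ is connected, and consequently $T^\alpha$ admits at least one comparable class.

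Fix such $T^\alpha$ with $t_\alpha\ge 2$ and form the face $F=\{x_{k_\gamma}:T^\gamma>T^\alpha\}\cup\{y_{\ell_\delta}:T^\delta<T^\alpha\}$, picking one representative per comparable class; $F$ is independent (an internal edge would require $T^\gamma\le T^\delta$ despite $T^\gamma>T^\alpha>T^\delta$, violating antisymmetry) and non-empty by the comparability observation. A direct check, using the constancy of neighbourhoods within each class, shows that in $H=G[V\setminus(F\cup N_G(F))]$ every $y_j$ whose class lies strictly above $T^\alpha$ is swept into $N_G(x_{k_\gamma})$ for the corresponding $\gamma$, and every $x_m$ whose class lies strictly below $T^\alpha$ into $N_G(y_{\ell_\delta})$ for the corresponding $\delta$. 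Consequently $\{x_i,y_i:i\in T^\alpha\}$ remains in $H$, induces $\mathcal{K}_{t_\alpha,t_\alpha}$, and has no edges to the remainder, hence forms a union of connected components of $H$. Therefore $\Ind H\cong \Ind\mathcal{K}_{t_\alpha,t_\alpha}\ast \Ind H'$, and the first factor is a pair of disjoint $(t_\alpha-1)$-simplices and so fails Cohen-Macaulayness once $t_\alpha\ge 2$; the join inherits that failure. But $\link_{\Ind G}F=\Ind H$ must be Cohen-Macaulay, since iterated application of Theorem~\ref{thm:complex-Buchsbaum} makes every non-empty face link of a Buchsbaum complex Cohen-Macaulay. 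This contradiction rules out the case $G\ne\mathcal{K}_{n,n}$ with a cross. The main obstacle is the class-and-poset analysis together with the construction of $F$ that isolates $\{x_i,y_i:i\in T^\alpha\}$ as a genuine union of components of $H$; once these are in place, the join formula and Proposition~\ref{pro:bi-xfree-is-cm} deliver the contradiction almost automatically.
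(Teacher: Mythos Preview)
Your proof is correct, but it takes a considerably more elaborate route than the paper's.

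The paper's argument for the forward direction is very short. Assuming a pure order with a cross $x_1y_2,\,x_2y_1$, purity (Theorem~\ref{thm:bi-pure}(ii)) immediately gives $N_G(x_1)=N_G(x_2)$ and $N_G(y_1)=N_G(y_2)$. If these common neighborhoods exhaust the two sides, one checks (again via~(ii)) that $G=\mathcal{K}_{n,n}$. Otherwise there is, say, some $x\notin N_G(y_1)$; then $x\ne x_1,x_2$ and $y_1,y_2\notin N_G(x)$, so the cross survives in $G\setminus(x\cup N_G(x))$. Hence $\link_{\Ind G}x$ still has a cross, is not Cohen--Macaulay by Proposition~\ref{pro:bi-xfree-is-cm}, and Theorem~\ref{thm:complex-Buchsbaum} finishes. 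A single vertex link suffices.

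Your approach instead develops the full structure sitting behind the cross: the equivalence relation $i\sim j$, the induced poset on its classes, and then a carefully chosen multi-vertex face $F$ whose link isolates the $\mathcal{K}_{t_\alpha,t_\alpha}$ block as a connected component. All of this is sound (the transitivity, antisymmetry, independence of $F$, and the isolation argument all check out), and it yields genuine structural insight---you have essentially shown that the pure order on a bipartite graph with unmixed $\Ind G$ is governed by a poset on blocks, each block inducing a complete bipartite piece. But for the theorem at hand this machinery is overkill: the paper sidesteps the poset entirely by observing that a single off-block vertex already gives a non-CM link. Your argument would pay off if one wanted finer information about \emph{which} links fail Cohen--Macaulayness, or a description of the face poset; for the bare Buchsbaum characterisation the one-vertex trick is cleaner.
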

\begin{proof}
    If $\Ind{G}$ is Cohen-Macaulay, then $\Ind{G}$ is Buchsbaum (this holds in general).  If $G = \mathcal{K}_{n,n}$, then for all
    vertices $v$ of $G$, $\link_{\Ind{G}}{v} = \Ind(G \setminus (v \cup N_G(v)))$ is a simplex on $n-1$ vertices, which is
    Cohen-Macaulay.  Thus, by Theorem~\ref{thm:complex-Buchsbaum}, $\Ind{G}$ is Buchsbaum.

    Suppose $G$ is not $K_{n,n}$ and $\Ind{G}$ is not Cohen-Macaulay.  Then by Proposition~\ref{pro:bi-xfree-is-cm}, every pure
    order of $G$ has a cross, say $x_1y_2, x_2y_1$.  Notice then $N_G(x_1) = N_G(x_2)$ and $N_G(y_1) = N_G(y_2)$, as $\Ind{G}$
    is pure.  Furthermore, if $N_G(y_1) = \{x_1, \ldots, x_n\}$ and $N_G(x_1) = \{y_1, \ldots, y_n\}$, then $G$
    is $\mathcal{K}_{n,n}$.  Hence we may assume there exists an $x \notin N_G(y_1)$.  Then
    $G \setminus (x \cup N_G(x))$ still contains $x_1y_2, x_2y_1$, hence does not have a Cohen-Macaulay independence
    complex by Proposition~\ref{pro:bi-xfree-is-cm}.  Therefore, $\Ind{G}$ is not Buchsbaum, by Theorem~\ref{thm:complex-Buchsbaum}.
\end{proof}

We note that the independence complex of ${\mathcal{K}_{n,n}}$ is Cohen-Macaulay if and only if $n = 1$.

\subsection{Compress and extrude} \label{sub:compress-extrude}
We now return to the question of whether the $h$-vector of a Cohen-Macaulay flag complex is a face vector
of a flag complex (see Conjecture~\ref{con:h-is-f}).

Let $G$ be a bipartite graph with a Cohen-Macaulay independence complex.  Then there exists some tri-partitioning 
$X = \{x_1, \ldots, x_n\}$, $Y = \{y_1, \ldots, y_n\}$, and $Z = \{z_1, \ldots, z_m\}$ such that:
\begin{enumerate}
    \item the vertices of $Z$ are exactly the isolated vertices of $G$,
    \item $X$ and $Y$ are a pure order of $G \setminus Z$, and
    \item $x_iy_j$ in $G$ implies $i \leq j$.
\end{enumerate}

Define the {\em compression} of $G$, denoted by $\check{G}$, by ``compressing'' all the edges $x_iy_i$ to the vertex $x_i$ and
removing the vertices of $Z$ altogether.  That is,
\[
    \check{G} := (X, \{x_ix_j \st i < j \mbox{ and } x_iy_j \in G \}).
\]
It turns out that compressing is the right action to find the desired simplicial complex.

\begin{proposition} \label{pro:bi-h-is-f}
    Let $G$ be a bipartite graph.  If $\Ind{G}$ is Cohen-Macaulay, then $h(\Ind{G}) = f(\Ind{\check{G}})$.
\end{proposition}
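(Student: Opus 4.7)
The plan is to compute $\mathrm{HS}(K[\Ind{G}], t)$ by killing an explicit regular sequence and identifying the Hilbert series of the resulting ring with the face polynomial of $\Ind{\check{G}}$. Set $R = K[\Ind{G}]$, viewed as the quotient of $K[X,Y,Z]$ by the edge ideal of $G$. Since $\Ind{G}$ is Cohen-Macaulay of dimension $n+m-1$, $R$ is a Cohen-Macaulay ring of Krull dimension $d = n+m$, and so $\mathrm{HS}(R,t) = h(\Ind{G},t)/(1-t)^d$.

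The central step is to show that the linear forms $\ell_i := y_i - x_i$, for $1 \le i \le n$, form a regular sequence on $R$. Because $R$ is Cohen-Macaulay, this is equivalent to the assertion that $\dim R/(\ell_1,\ldots,\ell_n) = m$. Eliminating $y_i$ via $y_i = x_i$, the quotient becomes $K[X,Z]/J$, where $J$ is the image of the edge ideal of $G$ under the substitution $y_i \mapsto x_i$. By the Cohen-Macaulay pure order, every edge of $G$ has the form $x_iy_j$ with $i \le j$, which maps to $x_ix_j$: the diagonal edges $x_iy_i$ (present by condition (i) of the pure order) contribute the relations $x_1^2,\ldots,x_n^2$, while for $i < j$ the edges $x_iy_j \in G$ are, by definition, precisely the edges of $\check{G}$. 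Hence $J = (x_1^2,\ldots,x_n^2) + I(\check{G})$, and since every $x_i$ is nilpotent modulo $J$, we obtain $\dim R/(\ell_1,\ldots,\ell_n) = \dim K[Z] = m$, as required.

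A $K$-basis for $K[X]/((x_1^2,\ldots,x_n^2) + I(\check{G}))$ is given by the squarefree monomials $\prod_{x_i \in I} x_i$ indexed by the faces $I$ of $\Ind{\check{G}}$, so its Hilbert series equals $F(\Ind{\check{G}},t) := \sum_{i \ge 0} f_{i-1}(\Ind{\check{G}})\,t^i$. Using the tensor decomposition $R/(\ell_1,\ldots,\ell_n) \cong K[Z] \otimes_K K[X]/((x_1^2,\ldots,x_n^2) + I(\check{G}))$ together with the standard relation for killing a regular sequence of linear forms yields
\[
    \mathrm{HS}(R,t) \;=\; \frac{\mathrm{HS}(R/(\ell_1,\ldots,\ell_n),t)}{(1-t)^n} \;=\; \frac{F(\Ind{\check{G}},t)}{(1-t)^{n+m}} \;=\; \frac{F(\Ind{\check{G}},t)}{(1-t)^d}.
\]
Comparing numerators with $h(\Ind{G},t)/(1-t)^d$ gives $h_i(\Ind{G}) = f_{i-1}(\Ind{\check{G}})$ for all $i \ge 0$, which is the claimed equality. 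The only non-routine ingredient is the regular-sequence claim, but the Cohen-Macaulayness of $R$ reduces it entirely to the straightforward dimension count above, which is immediate from the presence of each $x_i^2$ in $J$.
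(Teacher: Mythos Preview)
Your proof is correct and takes a genuinely different route from the paper's. The paper argues by induction on $n$: it passes to the links of $x_n$ and $y_n$, applies the inductive hypothesis to the compressions $\check{H}$ and $\check{J}$, and then recombines everything via a combinatorial count of independent sets in $G$ according to whether they contain $x_n$, $y_n$, or neither, together with the binomial transform between $f$- and $h$-vectors. Your argument is algebraic and non-inductive: the specific linear forms $\ell_i = y_i - x_i$ are exactly what collapse the edge ideal of $G$ onto $(x_1^2,\ldots,x_n^2) + I(\check{G})$, and the Cohen--Macaulay hypothesis is invoked in the cleanest possible way, turning the easy dimension drop into the regular-sequence claim. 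Your approach is shorter and makes transparent why the diagonal edges $x_iy_i$ and the ordering condition $i \le j$ of the Herzog--Hibi pure order are precisely what is needed; the paper's inductive argument, by contrast, stays entirely within enumerative combinatorics and avoids any appeal to regular sequences or Hilbert series manipulations.
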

\begin{proof}
    Let $X, Y, Z$ be as above.  Notice that $\dim{\Ind{G}} = n+m-1$, as, e.g., $X \cup Z$ is a maximal independent
    set in $G$.

    If $n = 0$, then $G$ is a graph of $m$ disjoint vertices so $\check{G}$ is the empty graph and  $\Ind{\check{G}}$ 
    is the empty complex, hence $h(\Ind{G}) = (1) = f(\Ind{\check{G}})$.

    Suppose $n \geq 1$ and let $H = G \setminus (x_n \cup N_G(x_n))$; that is, $\Ind{H} = \link_{\Ind{G}}x_n$. 
    Then $\Ind{H}$ is Cohen-Macaulay and, further, $\dim{\Ind{H}} = n + m - 2$, as $(X \setminus x_n) \cup Z$ is
    a maximal independent set in $H$.  Thus, by induction, $h(\Ind{H}) = f(\Ind{\check{H}})$.  Using Equation~\eqref{equ:h-to-f}, we obtain
    \[
        f_{j-1}(\Ind{H}) = \sum_{i=0}^{j}\binom{n+m-1-i}{j-i}h_{i}(\Ind{H}) = \sum_{i=0}^{j}\binom{n+m-1-i}{j-i}f_{i-1}(\Ind{\check{H}}).
    \]
    Further, we have
    \[
        f_{j-1}(\Ind{H}) + f_{j-2}(\Ind{H}) = \sum_{i=0}^{j}\binom{n+m-i}{j-i}f_{i-1}(\Ind{\check{H}}),
    \]
    where $f_{j-1}(\Ind{H})$ counts the number of independent sets of size $j$ in $G$ without $x_n$ and $y_n$ and
    $f_{j-2}(\Ind{H})$ counts the number of independent sets of size $j$ in $G$ with $x_n$.

    Let $J = G \setminus (y_n \cup N_G(y_n))$; that is, $\Ind{J} = \link_{\Ind{G}}y_n$.  Then $\Ind{J}$ is Cohen-Macaulay,
    and, further, $\dim{\Ind{J}} = n + m - 2$, as $(Y \setminus y_n) \cup Z$ is a maximal independent set in $J$.  Thus, by
    induction, $h(\Ind{J}) = f(\Ind{\check{J}})$ and so
    \[
        f_{j-2}(\Ind{J}) = \sum_{i=0}^{j-1}\binom{n+m-1-i}{j-1-i}h_{i}(\Ind{J})= \sum_{i=0}^{j-1}\binom{n+m-1-i}{j-1-i}f_{i-1}(\Ind{\check{J}}),
    \]
    which counts the number of independent sets of size $j$ in $G$ with $y_n$, as the vertices of $J$ are
    exactly those independent of $y_n$.

    Now, $f_{i-1}(\Ind{\check{H}}) + f_{i-2}(\Ind{\check{J}}) = f_{i-1}(\Ind{\check{G}})$ as the first counts
    the independent sets of size $i$ in $\check{G}$ without $y_n$ and the second counts the independent sets of
    size $i$ in $\check{G}$ with $y_n$.  Hence, for $0 \leq j \leq n+m$,
    \begin{align*}
        f_{j-1}(\Ind{G}) & = f_{j-1}(\Ind{H}) + f_{j-2}(\Ind{H}) + f_{j-2}(\Ind{J}) \\
                         & = \sum_{i=0}^{j}\binom{n+m-i}{j-i}f_{i-1}(\Ind{\check{H}}) + \sum_{i=0}^{j-1}\binom{n+m-1-i}{j-1-i}f_{i-1}(\Ind{\check{J}}) \\
                         & = \sum_{i=0}^{j}\binom{n+m-i}{j-i}\left( f_{i-1}(\Ind{\check{H}}) + f_{i-2}(\Ind{\check{J}}) \right) \\
                         & = \sum_{i=0}^{j}\binom{n+m-i}{j-i}f_{i-1}(\Ind{\check{G}}).
    \end{align*}
    Solving this system for $f_{j-1}(\Ind{\check{G}})$ yields
    \[
        f_{j-1}(\Ind{\check{G}}) = \sum_{i=0}^{j}{(-1)^{j-i}\binom{n+m-i}{j-i}f_{i-1}(\Ind{G})} = h_j(\Ind{G}),
    \]
    for $0 \leq j \leq n+m$.
\end{proof}

The following consequence was independently, and in a different manor, shown in~\cite[Corollary~5.4]{CV}.
\begin{theorem} \label{thm:hbiCM-is-fFlag}
    The $h$-vector of an independence complex of a bipartite graph that is Cohen-Macaulay is a face vector of a flag complex.
\end{theorem}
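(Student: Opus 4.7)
The plan is to deduce this theorem immediately from Proposition~\ref{pro:bi-h-is-f} together with the definition of a flag complex. Given a bipartite graph $G$ whose independence complex $\Ind{G}$ is Cohen-Macaulay, Proposition~\ref{pro:bi-h-is-f} produces a specific graph $\check{G}$, the compression of $G$, satisfying $h(\Ind{G}) = f(\Ind{\check{G}})$. The right-hand side is the face vector of the simplicial complex $\Ind{\check{G}}$, so the task reduces to observing that this complex is flag.

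I would simply note that by the discussion in Section~\ref{sec:preliminaries}, a simplicial complex is flag exactly when its Stanley-Reisner ideal is generated by quadratic squarefree monomials, equivalently, when it arises as the independence complex of some graph. Since $\Ind{\check{G}}$ is manifestly the independence complex of the graph $\check{G}$, it is a flag complex. Combining this with Proposition~\ref{pro:bi-h-is-f} completes the proof.

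There is essentially no obstacle in the argument itself: all of the work has been done in proving Proposition~\ref{pro:bi-h-is-f}, and the theorem is a one-line corollary once that identity between the $h$-vector of $\Ind{G}$ and the face vector of $\Ind{\check{G}}$ is in hand. The only thing worth writing explicitly is the recognition that $\Ind{\check{G}}$, being the independence complex of a graph, automatically lies in the class of flag complexes targeted by the statement.
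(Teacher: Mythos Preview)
Your proposal is correct and matches the paper's approach exactly: the paper presents this theorem as an immediate consequence of Proposition~\ref{pro:bi-h-is-f}, with no additional argument given, precisely because $\Ind{\check{G}}$ is by construction a flag complex.
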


A natural question is, which graphs are compressions of bipartite graphs with Cohen-Macaulay independence complexes?
To study this, we define an {\em extrusion} of any graph $G$ on vertices $\{v_1, \ldots, v_n\}$ to be a new bipartite graph
$\hat{G}$ on vertices $\{x_1, \ldots, x_n, y_1, \ldots, y_n\}$ with edge set given by the edges $x_iy_i$, for $1 \leq i \leq n$,
and either $x_iy_j$ or $x_jy_i$, for each edge $v_iv_j$ in $G$.  Note extrusion is {\em not} unique and if $\Ind{\hat{G}}$ is pure,
then $\hat{G}$ is cross-free by construction and thus $\Ind(\hat{G})$ is Cohen-Macaulay by Proposition~\ref{pro:bi-xfree-is-cm}.

We call a graph {\em Cohen-Macaulay extrudable} if there is some extrusion of the graph with a Cohen-Macaulay independence complex.
\begin{proposition} \label{pro:bi-cm-extrudable}
    Bipartite graphs are Cohen-Macaulay extrudable.
\end{proposition}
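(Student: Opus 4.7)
The plan is to exhibit an extrusion whose independence complex is Cohen-Macaulay. Given a bipartite graph $G$ with bipartition $V_1 \sqcup V_2$, I would first relabel the vertices $v_1,\dots,v_n$ so that $V_1 = \{v_1,\dots,v_k\}$ precedes $V_2 = \{v_{k+1},\dots,v_n\}$. For each edge $v_iv_j$ of $G$ (the relabeling forces $i \leq k < j$), I would then choose to include the edge $x_iy_j$ in $\hat{G}$ rather than $x_jy_i$. Together with the matching edges $x_iy_i$, this defines a legitimate extrusion in which every non-matching edge is ``oriented'' from $V_1$ to $V_2$.

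The main step is to verify that $\{x_1,\dots,x_n\}, \{y_1,\dots,y_n\}$ is a pure order of $\hat{G}$ in the sense of Theorem~\ref{thm:bi-pure}. Condition~(i) holds by construction. Condition~(ii) is vacuously satisfied: if $x_iy_j$ and $x_jy_\ell$ were both non-matching edges with $i,j,\ell$ distinct, then the index $j$ would have to lie in $V_2$ (because $x_iy_j$ comes from an edge $v_iv_j$ with $v_i \in V_1$) and simultaneously in $V_1$ (because $x_jy_\ell$ comes from an edge $v_jv_\ell$ with $v_j \in V_1$), which is impossible. So $\Ind(\hat{G})$ is pure, and then the remark following the definition of extrusion (cross-freeness is automatic from the choice ``exactly one of $x_iy_j$ or $x_jy_i$'') combined with Proposition~\ref{pro:bi-xfree-is-cm} yields Cohen-Macaulayness. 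Alternatively, since every non-matching edge $x_iy_j$ satisfies $i < j$ by construction, one could directly invoke Theorem~\ref{thm:bi-cm}.

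I do not foresee any real obstruction here: the construction is essentially forced by the desire to make the nontrivial hypothesis of Theorem~\ref{thm:bi-pure} vacuous, and the bipartite structure of $G$ is precisely what permits a consistent orientation of every edge from $V_1$ to $V_2$ (which for a non-bipartite graph would fail because a triangle would produce a directed cycle and then demand additional non-existent edges from transitivity). The only small care needed in the write-up is to separate matching edges from non-matching edges when checking the pure-order conditions.
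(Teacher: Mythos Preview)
Your proposal is correct and uses the very same extrusion as the paper (orient every edge of $G$ from $V_1$ to $V_2$, which, as the paper remarks, is just whiskering $G$). The only difference is cosmetic: the paper verifies purity by noting that each $y_i$ with $i\le k$ and each $x_i$ with $i>k$ has degree one (so $\hat G$ is a full whiskering, hence pure by Lemma~\ref{lem:cw-dim-purity}), whereas you check condition~(ii) of Theorem~\ref{thm:bi-pure} directly and observe it is vacuous; either way one then finishes via Proposition~\ref{pro:bi-xfree-is-cm} or Theorem~\ref{thm:bi-cm}.
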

\begin{proof}
    Let $G$ be a bipartite graph with bi-partition $U = \{u_1, \ldots, u_m\}, V = \{v_{m+1}, \ldots, v_n\}$.  Then $G$ can
    be extruded to the graph $\hat{G}$ with vertex set $\{x_1, \ldots, x_n, y_1, \ldots, y_n\}$ and edge set given by
    $x_iy_i$, for $1 \leq i \leq n$, and $x_iy_j$, for all edges $u_iv_j$ in $G$.

    Hence, as $y_i$, for $1 \leq i \leq m$, and $x_i$, for $m+1 \leq i \leq n$, are all degree one, we have that 
    $\Ind{\hat{G}}$ is pure and thus Cohen-Macaulay.
\end{proof}

The extrusion described in the proof of Proposition~\ref{pro:bi-cm-extrudable} is just whiskering the graph at every vertex.
\begin{example} \label{exa:cm-extrusion}
    Figure~\ref{fig:extrude} demonstrates the extrusion described in the proof of Proposition~\ref{pro:bi-cm-extrudable}.  It illustrates
    that the chosen extrusion $\hat{G}$ is simply a whiskering of the graph $G$ at every vertex.
    \begin{figure}[!ht]
        \includegraphics{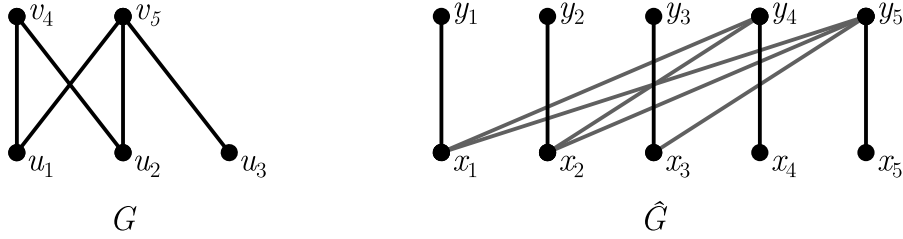}
        \caption{The extrusion $\hat{G}$ of the bipartite graph $G$ as described in the proof of Proposition~\ref{pro:bi-cm-extrudable}; the dark grey
            lines in the graph $\hat{G}$ correspond to the lines originating from the graph $G$.}
        \label{fig:extrude}
    \end{figure}
\end{example}

Hence by Proposition~\ref{pro:cw-h-is-f}, we have the following consequence.
\begin{corollary} \label{cor:bi-bi}
    The face vector of the independence complex of every bipartite graph is the $h$-vector of the independence of some
    vertex-decomposable bipartite graph.
\end{corollary}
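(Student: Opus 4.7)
The plan is to exhibit the fully-whiskered graph $G^\ta$ (obtained from $G$ by clique-whiskering with the trivial clique vertex-partition $\ta$) as the desired witness. Given a bipartite graph $G$ with bipartition $V_1 \sqcup V_2$, I would first observe that $G^\ta$ is again bipartite: each whisker attached to a vertex of $V_i$ contributes a new pendant vertex that can be placed on the opposite side $V_{3-i}$, and since the added edges run only between original vertices and their whiskers, the resulting partition is a bipartition of $G^\ta$.

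Next, I would appeal to Proposition~\ref{pro:bi-cm-extrudable} together with the identification in Example~\ref{exa:cm-extrusion}, which make explicit that the Cohen-Macaulay extrusion $\hat{G}$ produced there is precisely the full whiskering $G^\ta$. Hence $\Ind{G^\ta}$ is Cohen-Macaulay, and by Corollary~\ref{cor:bi-cm-vd} this Cohen-Macaulayness automatically upgrades to vertex-decomposability (since we are in the bipartite setting).

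Finally, I would invoke Proposition~\ref{pro:cw-h-is-f} with the clique vertex-partition $\ta$ of $G$ to conclude that
\[
    h(\Ind{G^\ta}) = f(\Ind G).
\]
Thus $G^\ta$ is a vertex-decomposable bipartite graph whose independence complex has $h$-vector equal to the face vector of $\Ind G$, which is exactly the assertion of the corollary.

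There is no genuine obstacle to carry out: the result is a direct bookkeeping consequence of Proposition~\ref{pro:cw-h-is-f}, Proposition~\ref{pro:bi-cm-extrudable}, and Corollary~\ref{cor:bi-cm-vd}. The one point that deserves to be pointed out explicitly in the write-up is the identification of the extrusion from Proposition~\ref{pro:bi-cm-extrudable} with the fully-whiskered graph $G^\ta$, so that clique-whiskering machinery can be applied to it; this identification is precisely what Example~\ref{exa:cm-extrusion} provides.
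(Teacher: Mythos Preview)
Your proposal is correct and follows essentially the same route as the paper: the paper's one-line justification (``Hence by Proposition~\ref{pro:cw-h-is-f}'') sits immediately after Proposition~\ref{pro:bi-cm-extrudable} and Example~\ref{exa:cm-extrusion}, so the intended witness is precisely the fully-whiskered graph $G^\ta$, identified there as the extrusion $\hat G$. The only minor simplification available is that vertex-decomposability of $\Ind{G^\ta}$ already follows directly from Theorem~\ref{thm:cw-vd}, so the detour through Cohen--Macaulayness and Corollary~\ref{cor:bi-cm-vd} is not strictly necessary.
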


Unfortunately, the converse is not true (see Example~\ref{exa:bi-bi-converse-fail} below).  However, by Proposition~\ref{pro:bi-h-is-f}
the $h$-vector of the independence complex of a bipartite graph that is vertex-decomposable is the face vector of the independence
complex of some (not necessarily bipartite) graph.
\begin{example} \label{exa:bi-bi-converse-fail}
    Let $G$ be the Ferrers graph given by the edges $x_1y_1$, $x_1y_2$, $x_1y_3$, $x_2y_2$, $x_2y_3$, $x_3y_3$; this graph
    is shown in figure~\ref{fig:bipartite}.
    Then $\Ind{G}$ is Cohen-Macaulay and hence vertex-decomposable, moreover, the $h$-vector of $\Ind{G}$ is $(1,3)$.  However,
    the only graph with independence complex having face vector $(1,3)$ is the three-cycle, which is not bipartite.
    \begin{figure}[!ht]
        \includegraphics{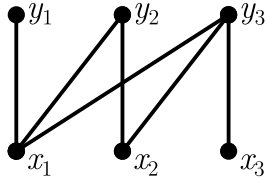}
        \caption{The Ferrers graph $G$}
        \label{fig:bipartite}
    \end{figure}
\end{example}

A graph is said to have {\em odd-holes} if it has an induced subgraph that is an odd-cycle of length at least five.
\begin{proposition} \label{pro:odd-holes-not-cm-extrudable}
    A graph with odd-holes is not Cohen-Macaulay extrudable.
\end{proposition}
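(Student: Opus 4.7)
The plan is to argue by contradiction: assume $G$ contains an induced odd cycle $C = v_1 v_2 \cdots v_{2k+1}$ (with $k \geq 2$) and that some extrusion $\hat{G}$ of $G$ has $\Ind{\hat{G}}$ Cohen-Macaulay. By Theorem~\ref{thm:bi-cm}, there then exist permutations $\sigma, \tau$ of $\{1,\ldots,n\}$ such that setting $X_i := x_{\sigma(i)}$ and $Y_i := y_{\tau(i)}$ yields a pure order of $\hat{G}$ in which $X_iY_j \in E(\hat{G})$ with $i \neq j$ forces $i < j$. I aim to derive a contradiction from the interaction of this pure order with the cycle $C$.

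My first step is to reduce to the natural pairing, i.e.\ to show $\sigma = \tau$. For each $j$, the matching edge $x_jy_j$ of $\hat{G}$ appears in the pure order as $X_{\sigma^{-1}(j)}Y_{\tau^{-1}(j)}$, so the Cohen-Macaulay inequality yields $\sigma^{-1}(j) \leq \tau^{-1}(j)$. Since $\sigma^{-1}$ and $\tau^{-1}$ are bijections of $\{1,\ldots,n\}$, their sums over $j$ agree, so termwise equality must hold and hence $\sigma = \tau$.

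The second step is a local chain analysis on $C$ using Theorem~\ref{thm:bi-pure}(ii). Because $\sigma = \tau$, the pair $(x_i, y_i)$ sits in a single super-vertex of the pure order, so the transitivity condition applies through it. For two consecutive cycle edges $v_{i-1}v_i$ and $v_iv_{i+1}$, the two ``flow-through'' extrusion choices at $v_i$---namely $\{x_{i-1}y_i,\,x_iy_{i+1}\}$ and $\{x_{i+1}y_i,\,x_iy_{i-1}\}$---would each chain through the super-vertex of $v_i$ and demand that $\hat{G}$ contain $x_{i-1}y_{i+1}$ or $x_{i+1}y_{i-1}$, respectively. But $C$ is induced of length at least five, so $v_{i-1}v_{i+1} \notin E(G)$ and neither of these extruded edges is available. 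Thus at every $v_i$ the two incident cycle edges must point either both out of $x_i$ or both into $y_i$, making $v_i$ a source or a sink of the orientation induced on $C$. A source's cycle-neighbor along a shared edge receives an incoming arrow and so must be a sink, and symmetrically for a sink, so sources and sinks must alternate around $C$---which is impossible for a cycle of odd length. This contradiction completes the argument.

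I expect the first step to be the main obstacle, since a priori $\hat{G}$ could admit perfect matchings quite different from its natural one; the symmetric counting argument comparing $\sigma^{-1}$ and $\tau^{-1}$ is what pins the pure order down cleanly, after which the chain condition on $C$ yields the parity obstruction routinely.
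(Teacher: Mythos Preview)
Your argument is correct and, at its core, matches the paper's: both show that transitivity through the natural matching $\{x_iy_i\}$ forces each cycle vertex to be a ``source'' or a ``sink'' in the induced orientation, yielding a proper $2$-colouring of an odd cycle. The paper simply walks around the hole, propagating the forced extrusion choices step by step and meeting the contradiction at the closing edge; your source/sink alternation is an equivalent repackaging of that walk.

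The genuine difference lies in how you justify that transitivity holds with respect to the natural matching. The paper appeals directly to purity (``otherwise we introduce an obstruction to purity''), implicitly using that if $x_ay_b,\,x_by_c\in E(\hat G)$ but $x_ay_c\notin E(\hat G)$, then $\{x_a,y_c\}$ extends to a maximal independent set of size at most $n-1$ (no vertex of the pair $\{x_b,y_b\}$ can be added). You instead invoke the full Cohen--Macaulay order of Theorem~\ref{thm:bi-cm} and use the neat counting $\sum_j\sigma^{-1}(j)=\sum_j\tau^{-1}(j)$ to force $\sigma=\tau$. Your route is a bit longer but makes the ``which matching?'' step completely explicit; the paper's route is shorter and actually shows that \emph{purity alone} (not full Cohen--Macaulayness) already obstructs extruding an odd hole, so your first step, while correct, is more than is needed. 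One small technical point: your $\sigma=\tau$ argument tacitly assumes the bipartition supplied by Theorem~\ref{thm:bi-cm} is $\{x_i\}$ versus $\{y_i\}$; this is automatic on the connected component containing the hole (and can be arranged globally by flipping components), so it causes no real trouble.
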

\begin{proof}
    Let $G$ be a graph with odd-holes.  In particular, let $v_1, \ldots, v_{2m+1}$ be an odd-hole of $G$ and suppose
    that $v_i$ extrudes to the pair $x_i, y_i$.  Without loss of generality, assume the edge $v_1v_2$ is extruded to $x_1y_2$. 
    Then we must extrude the edge $v_2v_3$ to $x_3y_2$, otherwise we introduce an obstruction to purity
    as $v_1v_3$ is not an edge in $G$.  Continuing in this manner, edges $v_{2k-1}v_{2k}$ extrude to $x_{2k-1}y_{2k}$ 
    and edges $v_{2k}v_{2k+1}$ extrude to $x_{2k+1}y_{2k}$.

    The remaining edge $v_1v_{2m+1}$ can either be extruded as $x_1y_{2m+1}$ or $x_{2m+1}y_1$.  In the first case,
    then $x_1y_{2m+1}$ and $x_{2m+1}y_{2m}$ would require $x_1y_{2m}$ for purity, but this contradicts our
    choice of an odd-hole, as this would require $v_1v_{2m}$ to be in $G$.  Similarly for the second case.
    Thus, odd-holes obstruct pure extrusions.
\end{proof}

Notice that odd-hole-free graphs without triangles are bipartite, hence Cohen-Macaulay extrudable.  However,
not all odd-hole-free graphs with triangles are Cohen-Macaulay extrudable.
\begin{example} \label{exa:cm-extrudable}
    Let $G$ be the three-cycle on vertices $\{u,v,w\}$.  Then the extrusion
    $\hat{G} = (\{u,v,w,x,y,z\}, \{ux, vy, wz, uy, vz, uz\})$ has a Cohen-Macaulay independence complex.

    Let $H = (\{u,v,w,x,y,z\}, \{uv, uw, vw, xy, xz, yz, ux, uy, vz\})$, then one can use a program such as Macaulay2~\cite{M2}
    to check the $2^9 = 512$ possible extrusions of $H$ to see that it is not Cohen-Macaulay extrudable.
\end{example}

Hence, we close with a question: which odd-hole-free graphs with triangles are Cohen-Macaulay extrudable?

\begin{acknowledgement}
    The authors would like to acknowledge the extensive use of Macaulay2 \cite{M2} in testing conjectures and generating
    examples.  The authors would like to thank Ben Braun for helpful discussions and comments and the anonymous referees 
    for many insightful comments.
\end{acknowledgement}


\end{document}